\providecommand{\U}[1]{\protect\rule{.1in}{.1in}}
\newtheorem{theorem}{Theorem}
\theoremstyle{plain}
\newtheorem{acknowledgement}{Acknowledgement}
\newtheorem{corollary}{Corollary}
\newtheorem{lemma}{Lemma}
\newtheorem{proposition}{Proposition}
\numberwithin{equation}{section}
\begin{document}
\title[On van der Corput property of squares]{On van der Corput property of
squares}
\author{Sini\v{s}a Slijep\v{c}evi\'{c}}
\address{Department of Mathematics, Bijeni\v{c}ka 30, Zagreb,\ Croata}
\email{slijepce@math.hr}
\urladdr{}
\date{September 21, 2009}
\subjclass[2000]{Primary 11P99; Secondary 37A45}
\keywords{S\'{a}rk\"{o}zy theorem, recurrence, difference sets, positive
definiteness, van der Corput property, Fourier analysis}

\begin{abstract}
We prove that the upper bound for the van der Corput property of the set of
perfect squares is $O((\log n)^{-1/3})$, giving an answer to a problem
considered by Ruzsa and Montgomery. We do it by constructing non-negative
valued, normed trigonometric polynomials with spectrum in the set of perfect
squares not exceeding $n$, and a small free coefficient $a_{0}=O((\log
n)^{-1/3})$.
\end{abstract}

\maketitle

\section{Introduction}

We say that a set $D$ of integers is a \textit{Poincar\'{e}} (\textit{%
recurrent, }or\textit{\ intersective}) set, if for any set $A$ of integers
with non-negative upper density 
\begin{equation*}
\rho (A)=\lim \sup_{n\rightarrow \infty }|A\cap \lbrack 1,n]|/n>0,
\end{equation*}%
its difference set $A-A$ contains an element of $D$. There is also an
equivalent ergodic theoretical characterization of the Poincar\'{e} property
(\cite{Furstenberg:77}). Furstenberg and S\'{a}rk\"{o}zy proved
independently that the sets of squares, sets of integer values of
polynomials with integer coefficients such that $P(0)=0$ and sets of shifted
primes $p-1$ and $p+1$ are Poincar\'{e} sets (\cite{Furstenberg:77}, \cite%
{Sarkozy:78a}, \cite{Sarkozy:78b}).

Given any set of integers $D$, one can define the function $\alpha :%
\boldsymbol{N}\rightarrow \lbrack 0,1]$ as $\alpha (n)=\sup \rho (A)$, where 
$A$ goes over all sets of integers whose difference set does not contain an
element of $D\cap \lbrack 1,n]$ (equivalent definitions of $\alpha $ can be
found in \cite{Ruzsa:84a}). One can then show that $D$ is Poincar\'{e} if
and only if%
\begin{equation*}
\lim_{n\rightarrow \infty }\alpha (n)=0\text{.}
\end{equation*}%
Upper bounds on the function $\alpha $ for the Poincar\'{e} sets mentioned
above have been obtained by various authors (\cite{Green:02}, \cite%
{Lucier:07}, \cite{Lucier:08}, \cite{Pintz:88}, \cite{Ruzsa:08}, \cite%
{Sarkozy:78a}, \cite{Sarkozy:78b}, \cite{Slijepcevic:03}), but even in the
simplest example of the set of squares, there is a huge gap between the best
upper and lower bounds for $\alpha $.

Kamae and Mend\`{e}z France introduced in \cite{Kamae:77} a closely related
notion of \textit{van der Corput} (or \textit{correlative}) sets, namely
sets of integers $D$ such that, given a real sequence $(x_{n})_{n\in N}$, if
all the sequences $(x_{n+d}-x_{n})_{n\in N}$, $d\in D$, are uniformly
distributed $\func{mod}1$, then the sequence $(x_{n})_{n\in N}$ is itself
uniformly distributed $\func{mod}1$\ (characterizations of the van der
Corput property are recalled in Section 2). Kamae and Mend\`{e}z France also
showed that van der Corput sets are Poincar\'{e} sets, and that all the
examples mentioned above are van der Corput sets.

Ruzsa introduced a function $\gamma :\boldsymbol{N}\rightarrow \lbrack 0,1]$
which quantifies the van der Corput property of a given set and gave several
characterizations of $\gamma $ (\cite{Montgomery:94}, \cite{Ruzsa:84a}).
Analogously as above, a set $D$ is a van der Corput set if and only if 
\begin{equation*}
\lim_{n\rightarrow \infty }\gamma (n)=0\text{.}
\end{equation*}

Ruzsa also showed that $\alpha \leq \gamma $. Ruzsa and Montgomery set a
problem of finding any upper bound for the function $\gamma $ for any
non-trivial van der Corput set, and in particular to find an upper bound for
the function $\gamma $ associated to the set of perfect squares (\cite%
{Montgomery:94}, unsolved problem 3; \cite{Ruzsa:84a}). They also
demonstrated that knowledge of upper bounds on the function $\gamma $ would
be useful, as $\gamma $ has various characterizations related to uniform
distribution and other properties of a set of integers.

In this paper we prove that for the set of squares, $\gamma (n)=O((\log
n)^{-1/3})$, and develop a technique which can likely be applied to other
van der Corput sets satisfying Kamae and Mend\`{e}z France condition (\cite%
{Kamae:77}, \S 3). We note that I. Ruzsa in \cite{Ruzsa:81} announced the
result that for the set of squares, $\gamma (n)=O((\log n)^{-1/2})$, but the
proof was never published.

It is important to emphasize that the gap between functions $\alpha $ and $%
\gamma $ can be arbitrarily large in relative terms. This was shown by
Bourgain, who constructed a set $D$ such that $\lim_{n\rightarrow \infty
}\alpha (n)=0$, while $\gamma (n)$ is bounded away from zero (\cite%
{Bourgain:87}). We argue in Section 2 that it is very difficult to obtain
van der Corput bounds for perfect squares better than $O((\log n)^{-1})$. We
also state the main result precisely in\ Section 2. In Section 3 we prove
the main result, postponing two key technical steps to Sections 4, 5. In
Section 6 we discuss an application of the main result to positive definite
functions vanishing on squares.

\section{Definitions and the main result}

We first introduce the notation, mostly following \cite{Montgomery:94}. If $%
D $ is a set of integers, then $D_{n}=D\cap \{1,...,n\}$. We denote by $%
\mathcal{T}(D)$ the set of all cosine polynomials%
\begin{equation}
T(x)=a_{0}+\tsum_{d\in D_{n}}a_{d}\cos (2\pi dx)\text{,}  \label{r:cosine}
\end{equation}%
$T(0)=1$, $T(x)\geq 0$ for all $x$, where $n$ is any integer and $%
a_{0},a_{d} $ are real numbers (i.e. $T$ is a normed non-negative valued
cosine polynomial with the spectrum in $D\cup \{0\}$).

Let $\mu $ be a Borel probability measure on the 1-torus $\boldsymbol{T}$
(parametrized with $[0,1)$). For $k\in \boldsymbol{Z}$ we define the Fourier
coefficients of $\mu $ to be the numbers%
\begin{equation*}
\widehat{\mu }(k)=\int \exp (-2\pi i\cdot kx)d\mu (x).
\end{equation*}

Let $\mathcal{M}(D)$ be the set of all probability measures on $\boldsymbol{T%
}$ such that $\widehat{\mu }(k)\not=0$ only when $|k|\not\in D$.

The following characterization of van der Corput sets is due to Kamae,\ Mend%
\`{e}z France and Ruzsa (\cite{Kamae:77}, \cite{Montgomery:94}, \cite%
{Ruzsa:84a}):

\begin{theorem}
\label{t:mainr}A subset $D$ of $\boldsymbol{N}$ is a van der Corput set if
and only if any of the following equivalent conditions hold:

(i)\ $\sup_{\mu \in \mathcal{M}(D)}\mu (\{0\})=0$,

(ii)\ $\inf_{T\in \mathcal{T}(D)}a_{0}=0$.
\end{theorem}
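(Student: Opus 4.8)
The plan is to treat the theorem as a chain: first that the two analytic conditions (i) and (ii) are equivalent, by a linear-programming duality on the torus $\boldsymbol{T}$, and then that (i) is equivalent to the van der Corput property, via Weyl's criterion and the spectral theory of positive-definite sequences. The only immediate step is one half of (i) $\Leftrightarrow$ (ii): for $T\in\mathcal{T}(D)$ with spectrum in $D_{n}\cup\{0\}$ and $\mu\in\mathcal{M}(D)$ one has $\int T\,d\mu=a_{0}\widehat{\mu}(0)+\sum_{d\in D_{n}}a_{d}\,\Re\widehat{\mu}(d)=a_{0}$, since $\widehat{\mu}(d)=0$ for $d\in D$ and $\widehat{\mu}(0)=1$; as $T\geq 0$ and $T(0)=1$ this forces $a_{0}=\int T\,d\mu\geq T(0)\mu(\{0\})=\mu(\{0\})$, hence $\sup_{\mu\in\mathcal{M}(D)}\mu(\{0\})\leq\inf_{T\in\mathcal{T}(D)}a_{0}$, and in particular (ii) $\Rightarrow$ (i).

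\emph{The duality.} For the reverse inequality I would work at a fixed level $n$: let $\mathcal{T}_{n}(D)$ and $\mathcal{M}(D_{n})$ be the subfamilies obtained by restricting the spectrum, resp.\ the vanishing condition, to $D_{n}$, so $\mathcal{T}(D)=\bigcup_{n}\mathcal{T}_{n}(D)$ and $\mathcal{M}(D)=\bigcap_{n}\mathcal{M}(D_{n})$. Minimising $a_{0}=\widehat{T}(0)=\int_{\boldsymbol{T}}T$ over $\mathcal{T}_{n}(D)$ is a (semi-infinite) linear program over the finite-dimensional cone of non-negative cosine polynomials with spectrum in $D_{n}\cup\{0\}$ with the normalisation $T(0)=1$; the feasible set is non-empty ($T\equiv 1$) and compact on each sublevel set $\{a_{0}\leq M\}$, because $\|T\|_{1}=a_{0}$ bounds every coefficient, so the minimum is attained. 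The cone of non-negative cosine polynomials of bounded spectrum is closed and, by the Riesz representation theorem and weak-$*$ compactness of probability measures, dual to the cone of positive measures on $\boldsymbol{T}$ seen through the relevant Fourier coefficients; the Lagrangian computation then shows the dual program is to maximise $c$ subject to the existence of a positive measure $\nu$ with $\widehat{\nu}(0)=1-c$ and $\Re\widehat{\nu}(d)=-c$ for $d\in D_{n}$, and the substitution $\mu=\nu+c\,\delta_{0}$ followed by symmetrising $\mu$ about the origin (which changes neither $\mu(\{0\})$ nor $\Re\widehat{\mu}$, and makes the coefficients real) turns this into $\sup_{\mu\in\mathcal{M}(D_{n})}\mu(\{0\})$. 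Strong duality gives $\inf_{T\in\mathcal{T}_{n}(D)}a_{0}=\sup_{\mu\in\mathcal{M}(D_{n})}\mu(\{0\})$. Letting $n\to\infty$, the left sides decrease to $\inf_{T\in\mathcal{T}(D)}a_{0}$, while a weak-$*$ limit point $\mu$ of near-optimal $\mu_{n}\in\mathcal{M}(D_{n})$ lies in $\mathcal{M}(D)$ (Fourier coefficients are weak-$*$ continuous) and satisfies $\mu(\{0\})\geq\limsup_{n}\mu_{n}(\{0\})$ (the set $\{0\}$ is closed, so $\nu\mapsto\nu(\{0\})$ is weak-$*$ upper semicontinuous); hence $\sup_{\mu\in\mathcal{M}(D)}\mu(\{0\})=\lim_{n}\sup_{\mu\in\mathcal{M}(D_{n})}\mu(\{0\})$. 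With the easy inequality this yields $\inf_{T\in\mathcal{T}(D)}a_{0}=\sup_{\mu\in\mathcal{M}(D)}\mu(\{0\})$, so (i) $\Leftrightarrow$ (ii).

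\emph{Van der Corput property versus (i).} If (i) holds and $(x_{m})$ has all $D$-differences uniformly distributed $\bmod 1$, fix an integer $h\neq 0$, put $u_{m}=\exp(2\pi ihx_{m})$, and pass to a subsequence along which $|N^{-1}\sum_{m\leq N}u_{m}|$ attains its $\limsup$ and, after diagonalising, every correlation $\gamma(d)=\lim N^{-1}\sum_{m\leq N}u_{m+d}\overline{u_{m}}$ exists. Then $\gamma$ is positive definite with $\gamma(0)=1$, so by Herglotz's theorem $\gamma=\widehat{\sigma}$ for a probability measure $\sigma$ on $\boldsymbol{T}$; Weyl's criterion applied to $(h(x_{m+d}-x_{m}))_{m}$ gives $\gamma(d)=0$ for $d\in D$, i.e.\ $\sigma\in\mathcal{M}(D)$, whence $\sigma(\{0\})=0$. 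The van der Corput difference inequality, averaged with the Fej\'{e}r kernel and letting first $N$ and then the kernel degree tend to infinity, bounds $\limsup_{N}|N^{-1}\sum_{m\leq N}u_{m}|^{2}$ by $\sigma(\{0\})=0$; by Weyl's criterion $(x_{m})$ is uniformly distributed, so $D$ is a van der Corput set. Conversely, if (i) fails, take $\mu\in\mathcal{M}(D)$ with $\mu(\{0\})>0$ and realise $\mu$ as the spectral measure of a unimodular sequence $(\exp(2\pi ix_{m}))$ — via the Koopman representation of a measure-preserving system with a modulus-one cyclic vector, or by a direct generalised Riesz product construction — so that the $D$-differences of $(x_{m})$ are uniformly distributed while $(x_{m})$ is not.

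\emph{Main obstacle.} The two delicate points are the justification of strong duality for the level-$n$ program — equivalently, the closedness of the cone of non-negative cosine polynomials with prescribed spectrum and the identification of its dual with a cone of positive measures — and the converse half of the last paragraph, namely producing a non-equidistributed sequence of the form $(\exp(2\pi ix_{m}))$ from a measure $\mu\in\mathcal{M}(D)$ with an atom at the origin, which is the harmonic-analytic heart of the Kamae--Mend\`{e}s France theorem.
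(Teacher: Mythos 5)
The paper does not prove Theorem~\ref{t:mainr} at all: it is quoted as a known background result and attributed to Kamae, Mend\`{e}s France, Ruzsa and Montgomery, with references to \cite{Kamae:77}, \cite{Montgomery:94}, \cite{Ruzsa:84a}, and (for the finite-level identity $\gamma(n)=\delta(n)$) \cite{Ruzsa84b}. So there is no proof in the paper to compare your proposal against; what follows is an assessment of your sketch on its own terms.

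Your easy half is correct: from $T\in\mathcal{T}(D)$ with spectrum in $D_{n}\cup\{0\}$ and $\mu\in\mathcal{M}(D)$ one has $\int T\,d\mu=a_{0}$ since $\widehat{\mu}(\pm d)=0$ for $d\in D_n$, and $\int T\,d\mu\geq T(0)\mu(\{0\})=\mu(\{0\})$ by nonnegativity, which gives the inequality $\sup\mu(\{0\})\leq\inf a_{0}$. Your LP-duality outline for the reverse inequality at a fixed level $n$ is the right idea and corresponds to Ruzsa's theorem that $\gamma(n)=\delta(n)$; the level-$n$ problem is genuinely finite-dimensional (the spectrum lies in the finite set $D_n\cup\{0\}$), $T\equiv 1$ is an interior feasible point so a Slater-type qualification holds, and your compactness and $n\to\infty$ arguments (weak-$*$ continuity of Fourier coefficients, upper semicontinuity of $\nu\mapsto\nu(\{0\})$ on the closed set $\{0\}$) are all fine. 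The direction ``(i) implies van der Corput'' via Herglotz, Weyl, and the van der Corput inequality with Fej\'{e}r weights is also correct in outline: for each fixed harmonic $h$ you extract a spectral measure $\sigma_h\in\mathcal{M}(D)$ for $(e(hx_m))$ and use $\limsup_N|N^{-1}\sum_{m\leq N}e(hx_m)|^2\leq\sigma_h(\{0\})=0$.

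There is, however, a genuine gap in your converse, and you should be aware it is more than a technicality. Given $\mu\in\mathcal{M}(D)$ with $\mu(\{0\})>0$, realising $\mu$ as the spectral measure of $(e(x_m))$ only controls the $h=1$ correlations. To contradict the van der Corput property you must produce a real sequence $(x_m)$ that is \emph{not} uniformly distributed while \emph{all} the difference sequences $(x_{m+d}-x_m)$, $d\in D$, \emph{are} uniformly distributed, i.e.\ $\lim_N N^{-1}\sum_m e(h(x_{m+d}-x_m))=0$ for every integer $h\neq 0$. Your construction only guarantees this for $h=1$; the spectral measure of $(e(hx_m))$ for $|h|\geq 2$ is a different object and you have said nothing that forces its Fourier coefficients to vanish on $D$. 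Moreover, even for $h=1$, $\sigma(\{0\})>0$ gives only the one-sided bound $\limsup_N|N^{-1}\sum e(x_m)|^2\leq\sigma(\{0\})$ from the van der Corput inequality; it does not by itself force the sequence to fail to be uniformly distributed. Closing these two gaps is exactly the harmonic-analytic construction at the core of Kamae--Mend\`{e}s France and Ruzsa, and it needs either a Riesz-product or Gaussian-process realisation engineered so that all harmonics behave correctly, not merely the Koopman cyclic-vector step you indicate. Since the paper uses Theorem~\ref{t:mainr} as a black box, this does not affect anything downstream, but as a proof of the theorem your sketch is incomplete at precisely the point you flag as delicate.
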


We can associate to a set $D$ two functions which describe how rapidly $D$
is becoming a van der Corput set:%
\begin{eqnarray}
\delta (n) &=&\sup_{\mu \in \mathcal{M}(D_{n})}\mu (\{0\}),  \label{r:vdc1}
\\
\gamma (n) &=&\inf_{T\in \mathcal{T}(D_{n})}a_{0}.  \label{r:vdc2}
\end{eqnarray}

Theorem \ref{t:mainr} now implies that a set is van der Corput if and only
if $\delta (n)\rightarrow 0$ as $n\rightarrow \infty $, or equivalently $%
\gamma (n)\rightarrow 0$ as $n\rightarrow \infty $. Ruzsa and Montgomery (%
\cite{Montgomery:94}, \cite{Ruzsa84b})\ proved the following result:%
\begin{equation*}
\gamma (n)=\delta (n)\text{.}
\end{equation*}

As was already noted in the introduction, the function $\gamma $ also
quantifies uniform distribution properties of a set $D$ (see \cite%
{Montgomery:94} for an exposition of the results). The function $\gamma $ is
an upper bound for the function $\alpha $ related to the Poincar\'{e}
property (see introduction), and also likely related to ergodic theoretical
and other properties related to the van der Corput property (\cite%
{Bergelson:07} contains the most recent results).

We focus in this paper on finding an upper bound for the function $\gamma $
associated to the set of perfect squares $Q$. Our approach is constructive:\
given $\delta >0$, we explicitly construct a non-negative normed cosine
polynomial (\ref{r:cosine})\ with coefficients in $Q_{n}\cup \{0\}$ and $%
a_{0}=\delta $.

Constructing non-negative trigonometric polynomials with a sparse set of
non-zero coefficients is not an easy task. We denote as usual $e(x)=\exp
(2\pi ix)$, and note that the real part of%
\begin{equation}
S(x,M)=\frac{1}{M}\tsum_{k=1}^{M}e(k^{2}x)  \label{r:start}
\end{equation}%
is a normed cosine polynomial. Recall that classical Weyl estimates show
that, if $M\gg q$ and $|x-p/q|\leq 1/q^{2}$ for some rational $p/q$, $%
(p,q)=1 $, then%
\begin{equation}
\left\vert \frac{1}{M}\tsum_{k=1}^{M}e(k^{2}x)\right\vert =O\left(
q^{-1/2}\right)  \label{r:start2}
\end{equation}%
(We prove a sharper version of (\ref{r:start2}) in Section 4). This means
that for large $M$, the sum (\ref{r:start})\ is sufficiently small for all $%
x $ which can be approximated by a rational with a large denominator;\ we
only need to fix "small denominators". A natural approach would be to define%
\begin{equation}
T_{L,M}(x)=\frac{1}{M}\tsum_{k=1}^{M}\cos (2\pi L^{2}k^{2}x)\text{.}
\label{r:start3}
\end{equation}

Then $T_{L,M}(x)$ is a cosine polynomial with non-zero coefficients only at
perfect squares, such that it is close to $1$ for $x$ which can be
approximated well by $p/q$, $q|L^{2}$. One can then hope that one can find
appropriate normalized weights $w_{k}$ such that for all $x$, the polynomial%
\begin{equation}
T(x)=\tsum_{L=1}^{L_{\max }}w_{k}T_{L,M}(x)\geq -\delta \text{.}
\label{r:intro1}
\end{equation}

We show that we can choose weights $w_{k}$ so that the values of $T_{L,M}(x)$
for rational $x$ with small denominator cancel out. This is more difficult
than it may seem, and is discussed in detail in Section 5. We also overcome
the second difficulty of oscillatory behavior of $T(x)$ near rationals with
a small denominator (see Proposition \ref{p:main3}), and prove the following
main result:

\begin{theorem}
\label{t:main2}If $\gamma $ is the function (\ref{r:vdc2})\ associated to
the set of perfect squares $Q$, then $\gamma (n)=O((\log n)^{-1/3})$.
\end{theorem}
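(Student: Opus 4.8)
The plan is to construct, for any prescribed $\delta = c(\log n)^{-1/3}$, an explicit polynomial $T \in \mathcal{T}(Q_n)$ with free coefficient $a_0 = \delta$, following the architecture sketched in the introduction. The starting point is the family $T_{L,M}(x) = \frac{1}{M}\sum_{k=1}^M \cos(2\pi L^2 k^2 x)$, which has spectrum in $Q$ and value close to $1$ at rationals $p/q$ with $q \mid L^2$. I would form $T(x) = \sum_{L} w_L T_{L,M}(x)$ for a suitable range $L \le L_{\max}$ and carefully chosen normalized weights $w_L \ge 0$ with $\sum_L w_L = 1$ (so that $T(0) = 1$ automatically and the spectrum stays in $Q_n \cup \{0\}$ provided $L_{\max}^2 M^2 \le n$). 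The two jobs the weights must do are: (a) make the near-$1$ contributions at rationals with small denominator $q$ cancel down to size $O(\delta)$ after summing over $L$, and (b) leave the polynomial $\ge -\delta$ everywhere.

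The key steps, in order, are as follows. First I would prove the sharpened Weyl-type estimate announced for Section 4: for $M$ large and $|x - p/q| \le 1/q^2$ with $(p,q)=1$, a bound of the shape $\left| \frac{1}{M}\sum_{k=1}^M e(k^2 x)\right| = O\big( (q(1 + M^2|x - p/q|))^{-1/2} + M^{-1}q^{1/2}\big)$ or similar, with explicit constants, so that one controls $T_{L,M}$ uniformly, not just on the major arcs. Second, for $x$ near a rational $p/q$ with small $q$, I would extract the main term of $T_{L,M}(x)$: writing $q = q_1 q_2$ according to $\gcd(q, L^2)$, the sum $\frac{1}{M}\sum_k \cos(2\pi L^2 k^2 x)$ splits into a "resonant" piece behaving like a rescaled Fejér-type kernel in the variable $L^2(x - p/q)$ times a Gauss-sum factor, plus a small error. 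Third — and this is the heart of the argument, the part deferred to Section 5 — I would choose the weights $w_L$ so that for every small $q$ the weighted sum of the Gauss-sum factors over those $L$ with a given value of $\gcd(q,L^2)$ vanishes or is $O(\delta)$; this is a combinatorial/number-theoretic problem about signed sums of quadratic Gauss sums indexed by $L$, and getting the cancellation to hold simultaneously for all $q$ up to the relevant threshold while keeping $w_L$ nonnegative and the support $L_{\max}$ only polynomially large (so that $\log n \sim \log L_{\max}$ gives the $(\log n)^{-1/3}$ rate) is where the exponent $1/3$ will come from. Fourth, I would handle the residual oscillation near small-denominator rationals — the content of Proposition \ref{p:main3} — by averaging $T$ against a short auxiliary kernel (or by a second-layer convolution) to damp the Fejér-type overshoot below $\delta$ without destroying nonnegativity of the spectrum-in-$Q$ structure. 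Finally, assembling the major-arc contributions (bounded by $O(\delta)$ via steps 2–4), the minor-arc contributions (bounded by $O(\delta)$ via step 1 once $M$ is large enough in terms of the threshold $Q_0 \sim \delta^{-2}$), and the trivial far-from-rational region, one gets $T(x) \ge -\delta$ everywhere; replacing $T$ by $(T + \delta)/(1+\delta)$ and adjusting constants yields a member of $\mathcal{T}(Q_n)$ with free coefficient $O(\delta) = O((\log n)^{-1/3})$, which by definition \eqref{r:vdc2} bounds $\gamma(n)$.

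I expect the main obstacle to be step three: the simultaneous cancellation of the small-denominator main terms. One cannot simply cancel each $q$ in isolation, because the resonance structure at $q$ and at its divisors and multiples overlap, and the Gauss-sum factor $\frac{1}{\sqrt{q'}}\sum_{r} e(r^2 \cdot)$ depends on $L$ in a multiplicative but non-monotone way through $\gcd(q, L^2)$; controlling the whole system forces a trade-off between how many denominators $q \le Q_0$ one can kill (which pushes $Q_0$, hence $\delta$, down) and how large $L_{\max}$ must be (which pushes $\log n$ up), and optimizing this trade-off — together with the loss incurred in step four from the oscillation damping — is precisely what produces the $-1/3$ rather than the $-1/2$ that a naive count might suggest. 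A secondary technical nuisance will be keeping every error term genuinely uniform in $x$ across the transitions between major arcs, minor arcs, and the intermediate range, since $T_{L,M}$ for large $L$ has its "major arcs" at denominators $q \mid L^2$ that are themselves large, and one must check these do not conspire to create a new peak.
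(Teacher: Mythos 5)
Your architecture---weighted averaging of $T_{L,M}$ over a family of very composite $L$'s to cancel the main term at small denominators, with a sharpened Weyl estimate and $L_{\max}=\exp(O(\delta^{-2}))$ coming from an LCM bound---matches Propositions~\ref{p:main1} and~\ref{p:main2}. The gap is your step four. You propose to damp the residual oscillation near small-denominator rationals by convolving against a short auxiliary (Fej\'er-type) kernel, or by re-weighting the exponential sum. The paper examines exactly this at the end of Section~4: replacing $S(x,M)$ by a Fej\'er-weighted sum changes the transfer error from $M^{2}\varepsilon$ to $M^{4}\varepsilon^{2}$, which has the same exponent ratio (the exponent on $M$ equal to, not less than, twice the exponent on $\varepsilon$), and the Dirichlet re-weighting of \cite{Pintz:88} is no better; neither removes the need for a further averaging device. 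Convolving $T$ itself is circular, since you would need $T\geq 0$ for the convolution to stay nonnegative. So a single kernel smoothing cannot do the job of Proposition~\ref{p:main3}.

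What Proposition~\ref{p:main3} actually does is average over $m\sim 1/\delta$ geometrically spaced scales $M_{k}=L^{2(m+k)}$, each paired with a Dirichlet approximation $p_{k}/q_{k}$ of $x$ at scale $R_{k}=L^{4(m+k)}$; for any fixed $x$, at most two indices $k$ fall in the transitional range where $E_{L,M_{k}}(q_{k},\varepsilon_{k})=\Theta(1)$, and the rest contribute $O(L^{-1/2})$, so the averaged error is $\leq\delta/2$. This forces $M_{\max}=L_{\max}^{O(1/\delta)}$, hence $n\sim L_{\max}^{2}M_{\max}^{2}=\exp(O(\delta^{-3}))$, and only then $\delta=O((\log n)^{-1/3})$. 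Your attribution of the exponent $1/3$ primarily to the Gauss-sum cancellation trade-off in step three is therefore off: that step alone caps the method at $(\log n)^{-1/2}$, a point the paper makes explicit right after the statement of Theorem~\ref{t:main2}. The extra factor of $\delta^{-1}$ is the cost of the multi-scale $M$-averaging that your kernel idea would need to be replaced by.
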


The key step in the proof is using a constant $L$ which is the smallest
common multiplier of all the numbers between $1$ and $O(1/\delta ^{2})$, and 
$n\geq L$. Lemma \ref{l:4} implies that $\log n\geq \log L=O(1/\delta ^{2})$%
, so by inserting $\gamma (n)=\delta $ one sees that the best bound which
can be obtained by pursuing that approach is $\gamma (n)=O((\log n)^{-1/2})$%
. It is very difficult to do better than that, as the Kamae and Mend\`{e}z
France criterion (\cite{Kamae:77}, \S 3), which is in our knowledge
essentially the only known method of proving the van der Corput property of
squares, also depends on showing that the sums (\ref{r:start3}) for $L=q!$
are small (more specifically, converge to $0$ as $M\rightarrow \infty $ and $%
x$ irrational).

We devote the rest of this section to comparing our result to other upper
and lower bounds. Incidentally, our bound is essentially the same as the
bound $\alpha (n)=O((\log n)^{-1/3+\varepsilon })$ obtained by S\'{a}rk\"{o}%
zy (\cite{Sarkozy:78a}). In \cite{Pintz:88} the authors showed that $\alpha
(n)=O((\log n)^{-c})$ for arbitrarily large $c$. We argue that the van der
Corput property of squares is quantitatively fundamentally different than
the Poincar\'{e} property, and that $c$ can not be arbitrarily large.

We can denote by $\mathcal{T}^{+}(D)$ the set of all trigonometric
polynomials (\ref{r:cosine}) with nonnegative coefficients, and define $%
\gamma ^{+}(n)$ as in (\ref{r:vdc2}), where the infimum goes over $\mathcal{T%
}^{+}(D_{n})$. Then clearly $\gamma (n)\leq \gamma ^{+}(n)$. The methods
developed in this paper actually enable constructing only polynomials with
non-negative coefficients, and result with bounds on $\gamma ^{+}(n)$. I.
Ruzsa proved that for the set of squares, $\gamma ^{+}(n)\gg (\log n)^{-1}$,
which suggests that achieving $\gamma (n)=O((\log n)^{-c})$ for arbitrarily
large $c$ would be technically very difficult.

\section{Construction of the trigonometric polynomial}

Recall the definition of $S(x,M)$ defined in (\ref{r:start}). We also
introduce the function 
\begin{equation}
S(x,L,M)=S(L^{2}x,M)=\frac{1}{M}\sum_{k=1}^{M}e(k^{2}L^{2}x),
\label{r:fung1}
\end{equation}%
and $S(x,L,M)$ is also normed, $S(1,L,M)=1$. The following estimate is
essential in our construction.

\begin{proposition}
\label{p:main1}If $L,M$ are integers, $x\in \lbrack 0,1]$ and $|x-p/q|\leq
\varepsilon $, $(p,q)=1$, then%
\begin{eqnarray}
|S(x,L,M)| &=&\vartheta _{L}(q)+O\left( \frac{\sqrt{\log q}}{\sqrt{M}}+\frac{%
\sqrt{q\log q}}{M}+L^{2}M^{2}\varepsilon \right) ,  \label{r:est} \\
\vartheta _{L}(q) &=&\left\{ 
\begin{array}{cc}
1, & q|L^{2}\text{,} \\ 
0, & q/(q,L^{2})\equiv 2(\func{mod}4), \\ 
r^{-1/2} & \text{otherwise,}%
\end{array}%
\right.  \label{r:est00}
\end{eqnarray}%
where $r=q/(q,2L^{2})$.\ Furthermore, if $q|L^{2}$, then%
\begin{equation}
S(x,L,M)=1+O(L^{2}M^{2}\varepsilon )\text{.}  \label{r:est1}
\end{equation}
\end{proposition}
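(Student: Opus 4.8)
The plan is to estimate the exponential sum $S(x,L,M)=\frac1M\sum_{k=1}^M e(k^2L^2x)$ by comparing it with the corresponding complete Gauss sum. First I would write $x=p/q+\eta$ with $|\eta|\le\varepsilon$ and split the argument $k^2L^2x=k^2L^2p/q+k^2L^2\eta$. The factor $e(k^2L^2\eta)$ is close to $1$: since $k\le M$ we have $|k^2L^2\eta|\le L^2M^2\varepsilon$, so $e(k^2L^2\eta)=1+O(L^2M^2\varepsilon)$ uniformly in $k$, and summing contributes the error term $O(L^2M^2\varepsilon)$. This already reduces everything to the "rational" sum $\frac1M\sum_{k=1}^M e(k^2L^2p/q)$, and in particular, when $q\mid L^2$ every term equals $1$, giving \eqref{r:est1} immediately.

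For the rational sum, let $m=(q,L^2)$ and write $q=m r'$ where I must relate $r'$ to the quantity $r=q/(q,2L^2)$ appearing in the statement; note $r'$ equals either $r$ or $2r$ according to the parity condition. The key observation is that $k\mapsto e(k^2L^2p/q)$ is periodic in $k$ with period dividing $q/m$ (since $L^2p/q = (L^2p/m)/(q/m)$ in lowest-terms-friendly form, using $(p,q)=1$ so that $(L^2p/m, q/m)=1$). So I would group the $M$ terms into $\lfloor M/(q/m)\rfloor$ complete blocks plus a remainder of at most $q/m\le q$ terms. Each complete block sums to the complete quadratic Gauss sum $G=\sum_{j=0}^{q/m-1} e(a j^2/(q/m))$ with $a$ coprime to $q/m$, whose modulus is classically $\sqrt{q/m}$ when $q/m$ is odd, $0$ when $q/m\equiv 2\pmod 4$, and $\sqrt{2q/m}$ when $4\mid q/m$ — exactly matching the three cases of $\vartheta_L(q)$ once one divides by $M$ and accounts for the number of blocks being $\approx M/(q/m)$, so that (number of blocks)$\cdot|G|/M \approx |G|/(q/m) = (q/m)^{-1/2}$ or $(2q/m)^{-1/2}$; tracking the $2$'s shows this equals $r^{-1/2}$ in the "otherwise" case. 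The incomplete final block contributes at most $q/m\le q$ in absolute value, hence $O(q/M)$ after dividing by $M$; more careful Cauchy--Schwarz / Weyl-style bounding of partial Gauss sums is what produces the sharper $O(\sqrt{q\log q}/M)$ and the companion term $O(\sqrt{\log q}/\sqrt M)$ (the latter dominating when $q$ is comparable to $M$).

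So the skeleton is: (1) peel off the $e(k^2L^2\eta)$ factor, absorbing it into $O(L^2M^2\varepsilon)$; (2) reduce to the complete Gauss sum over a block of length $q/(q,L^2)$, using coprimality of $L^2p/(q,L^2)$ and $q/(q,L^2)$; (3) evaluate the complete Gauss sum by the classical formula, producing $\vartheta_L(q)$ with the stated parity trichotomy; (4) bound the incomplete block and the fluctuation between $\lfloor M/(q/m)\rfloor$ and $M/(q/m)$ by a partial-Gauss-sum estimate to get the error $O(\sqrt{\log q}/\sqrt M+\sqrt{q\log q}/M)$. I expect step (4) — obtaining the sharp error term with the $\sqrt{\log q}$ factors rather than the cruder $O(\sqrt{q}/\sqrt M)$ one gets from trivially bounding each incomplete partial sum by $O(\sqrt q)$ — to be the main obstacle; this is presumably the "sharper version of \eqref{r:start2}" the author promised in Section 4, and it likely requires estimating $\max_{0\le t< q/m}\bigl|\sum_{j=0}^{t} e(aj^2/(q/m))\bigr|$ via a completion/van-der-Corput argument giving a logarithmic rather than square-root saving, or alternatively summing $|S|^2$ over the relevant range and using an $L^2$ average. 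The bookkeeping of the $(q,L^2)$ versus $(q,2L^2)$ distinction in step (3) is routine but must be done with care, since it is exactly what distinguishes the three branches of $\vartheta_L$.
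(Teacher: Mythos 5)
Your route to the main term is genuinely different from the paper's and it is correct. The paper works with the squared modulus $T=M^2|S(p/q,M)|^2$, substitutes $h=k-j$, separates the diagonal terms with $q\mid 2h$ (equivalently $r\mid h$, $r=q/(q,2)$) from the rest, evaluates $\sum_{r\mid h}(M-|h|)e(h^2p/q)$ directly (here the $q\equiv 2\pmod 4$ branch appears because $e(h^2p/q)$ then alternates in sign), and bounds the off-diagonal part by $\|2hp/q\|^{-1}$ summed over length-$r$ segments to get $T=M^2/r+O(M\log q+q\log q)$; taking square roots yields exactly $\sqrt{\log q}/\sqrt M+\sqrt{q\log q}/M$. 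You instead decompose $\sum_{k\le M}e(ak^2/Q)$, with $Q=q/(q,L^2)$ and $(a,Q)=1$, using periodicity mod $Q$ into $\lfloor M/Q\rfloor$ complete quadratic Gauss sums plus one partial block, and invoke the classical modulus $|G(a,Q)|\in\{0,\sqrt Q,\sqrt{2Q}\}$; your bookkeeping that the normalized main term equals $\vartheta_L(q)$, via $r=Q$ or $r=Q/2$ according to the parity of $Q$, checks out. For the error you correctly flag the crux: your decomposition leaves a single incomplete Gauss sum, and the standard completion bound $O(\sqrt Q\log Q)$ for it gives an error $O(\sqrt q\,\log q/M)$ after dividing by $M$, which has a different shape from the paper's $\sqrt{\log q}/\sqrt M+\sqrt{q\log q}/M$ -- the $1/\sqrt M$ piece there is an artifact of passing through $|S|^2$ and does not arise naturally from a direct block decomposition. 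Neither form is uniformly sharper than the other, and either is fully adequate for the use made of $E_{L,M}$ in Sections 3 and 5, where only monotonicity in $q,M$ and the crude bound $\log q\ll L^{1/2}$ are invoked. Your remaining steps -- peeling off $e(k^2L^2\eta)$ into the $O(L^2M^2\varepsilon)$ term, reducing $S(p/q,L,M)$ to $S(L^2p/q,M)$ with reduced denominator $Q$, and reading off the $q\mid L^2$ case -- coincide with the paper's.
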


We dedicate the next section to the proof of Proposition \ref{p:main1},
modifying well-known bounds on $S$ based on the Weyl exponential sum
methods. The key difference to what is common in the literature is the
attention we put in evaluating precisely the leading term, e.g. in the case $%
L=1$ typically only bounded in the form $O(q^{-1/2})$. This is essential to
achieve the optimal bound at the end. We also discuss in Section 4 why we
use the unweighted exponential sum $S$ rather than a weighted version.

To simplify working with (\ref{r:est}), we set%
\begin{eqnarray}
\tau _{L}(q) &=&\left\{ 
\begin{array}{cc}
1, & q|L^{2}\text{,} \\ 
0, & q/(q,L^{2})\equiv 2(\func{mod}4), \\ 
-r^{-1/2} & \text{otherwise,}%
\end{array}%
\right.  \label{r:deftau} \\
E_{L,M}(q,\varepsilon ) &=&\min \left\{ c_{1}\left( \frac{\sqrt{\log q}}{%
\sqrt{M}}+\frac{\sqrt{q\log q}}{M}+L^{2}M^{2}\varepsilon \right) ,2\right\} ,
\notag
\end{eqnarray}%
where $r=q/(q,2L^{2})\,\ $and the constant in $E_{L,M}(q,\varepsilon )$ is
the larger of the constants in the error terms in (\ref{r:est}) and (\ref%
{r:est1}). We can then rewrite (\ref{r:est}), (\ref{r:est1}) as%
\begin{equation}
\func{Re}S(x,L,M)\geq \tau _{L}(q)-E_{L,M}(q,\varepsilon )\text{,}
\label{r:est2}
\end{equation}%
where $|x-p/q|\leq \varepsilon $, $(p,q)=1$. We will say in the following
that functions \thinspace $\tau $ or $E$ have a certain property for each $%
x\in \lbrack 0,1]$, if for a given $x$ they have that property for some $%
(p,q)=1$, $\varepsilon \geq 0$, where $|x-p/q|\leq \varepsilon $.

Ideally, for a given $\delta >0$, we would like to choose constants $L$, $M$
large enough so that $\tau -E$ is for each $x$ bounded from below by $%
-\delta $. As this is not possible for either of the terms, we will need to
average over many '$L$' (the term $\tau $) and over many '$M$' (the term $E$%
) to achieve that. We start with the term $\tau $.

\begin{proposition}
\label{p:main2}Say $\delta >0$. There exist constants $\lambda >0\,$\ and $%
1=L_{0}\leq L_{1}\leq ...\leq L_{l}=L_{\max }$, $\Lambda
=\sum_{k=0}^{l}\lambda ^{k}$ such that for any integer $q>0$,%
\begin{equation}
\frac{1}{\Lambda }\sum_{k=0}^{l}\lambda ^{k}\tau _{L_{k}}(q)\geq -\delta /2,
\label{r:rr}
\end{equation}%
and $L_{\max }=O(\exp c_{2}(1/\delta )^{2})$.
\end{proposition}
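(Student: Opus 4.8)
I want to choose the $L_k$ so that each $L_k$ "kills" a specified range of denominators $q$ (in the sense that $q\mid L_k^2$), and then show that for the remaining $q$ — those not divided by any $L_k^2$ — the negative contributions $-r^{-1/2}$ are small enough after averaging with the geometric weights $\lambda^k$. The natural choice is to let $L_k$ be (a suitable power of) the least common multiple of all integers up to some threshold $N_k$, with $N_k$ growing geometrically in $k$; concretely $N_k = \lceil 2/\delta\rceil \cdot C^k$ for a constant $C$ to be fixed, and $l$ chosen so that $N_l$ is comparable to a fixed large constant times $(1/\delta)$, giving $l = O(\log(1/\delta))$ levels. Then by the prime number theorem $\log L_k = O(N_k)$, so $\log L_{\max} = O(N_l) = O((1/\delta)^2)$ after one picks $\lambda$ and $C$; wait — more carefully, to get the stated bound $L_{\max}=O(\exp c_2(1/\delta)^2)$ I should take each $N_k$ of size $O(1/\delta)$ times a geometric factor that telescopes, so that $N_{\max}=O((1/\delta)^2)$ and $\log L_{\max} = O(N_{\max} ) = O((1/\delta)^2)$.

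**Key steps in order.** First I would fix $\lambda>1$ and define, for a parameter $N$, the quantity $\Pi(N)=\operatorname{lcm}(1,2,\dots,N)$, and set $L_k = \Pi(N_k)^{2}$ (squaring so that $q \le N_k \Rightarrow q \mid L_k$, hence $\tau_{L_k}(q)=1$ for all such $q$; in fact any $q$ all of whose prime-power factors are $\le N_k$ divides $L_k^2$). Second, fix a $q>0$ and let $k^\ast$ be the least index with $\tau_{L_{k^\ast}}(q)=1$ (such an index exists once $N_l$ exceeds every prime power dividing $q$, but we only need the estimate for all $q$, and for $q$ with a large prime-power factor $k^\ast$ may not exist — handle that case separately, below). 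For $k\ge k^\ast$ we have $\tau_{L_k}(q)=1\ge 0$, so these terms only help. For $k<k^\ast$, bound $\tau_{L_k}(q)\ge -r_k^{-1/2}$ where $r_k=q/(q,2L_k^2)$; the point is that $r_k$ is large — since $q\nmid L_k^2$, the "unkilled part" $q/(q,L_k^2)$ has a prime-power factor exceeding $N_k$, so $r_k \gg N_k$, giving $r_k^{-1/2} = O(N_k^{-1/2})$. Third, sum:
\begin{equation*}
\frac{1}{\Lambda}\sum_{k=0}^{l}\lambda^k \tau_{L_k}(q) \ge -\frac{1}{\Lambda}\sum_{k<k^\ast}\lambda^k r_k^{-1/2} \ge -\frac{1}{\Lambda}\sum_{k<k^\ast}\lambda^k O(N_k^{-1/2}).
\end{equation*}
With $N_k$ growing geometrically, $N_k^{-1/2}$ decays geometrically, and choosing the growth rate of $N_k$ faster than $\lambda$ makes the summand $\lambda^k N_k^{-1/2}$ itself decay geometrically, so the whole sum is dominated by its largest term, which is of order $\lambda^{k^\ast - 1} N_{k^\ast-1}^{-1/2}$; dividing by $\Lambda \ge \lambda^{k^\ast-1}$ (if $k^\ast \ge 1$) leaves $O(N_{k^\ast-1}^{-1/2}) = O(N_0^{-1/2}) = O(\sqrt{\delta})$, which is $\le \delta/2$ once $N_0 = \lceil c_3/\delta\rceil$ with $c_3$ large. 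The case $k^\ast = 0$ gives $\tau_{L_0}(q)=\tau_1(q)\ge 0$ trivially when $q=1$; and for general small $q$ note $L_0 = 1$ does not kill much, so really I want $N_0$ large so that $L_0=\Pi(N_0)^2$ already kills all $q\le N_0$, i.e. the "$k<k^\ast$" sum is nonempty only when $q$ is already fairly large, which is exactly what forces $r_k^{-1/2}$ small.

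**The main obstacle.** The delicate point is the case of $q$ that is never killed — i.e. $q$ has a prime power factor larger than $N_l = N_{\max}$ — or more generally controlling $r_k$ uniformly. For such $q$, no term equals $1$, and I must show $\frac{1}{\Lambda}\sum_k \lambda^k(-r_k^{-1/2}) \ge -\delta/2$ directly; this works because $r_k \ge N_k \to \infty$ for all $k$ in the relevant range, in particular $r_k \ge N_0 \ge c_3/\delta$ for every $k$, so the bound is $-\max_k r_k^{-1/2} \ge -\sqrt{\delta/c_3}$, fine. The real care is in the number-theoretic claim "$q\nmid L_k^2 \Rightarrow q/(q,2L_k^2)$ has a prime-power factor $> N_k$, hence $\ge N_k$" — this requires observing that $(q, L_k^2)$ captures the full $p$-part of $q$ for every prime $p \le N_k$ (because $\operatorname{ord}_p(L_k^2) = 2\operatorname{ord}_p(\Pi(N_k)^2) = 4\lfloor \log_p N_k\rfloor \ge \operatorname{ord}_p(q)$ unless $\operatorname{ord}_p(q)$ is enormous, namely $p^{\operatorname{ord}_p q} > N_k^4$) — so I should define $L_k=\Pi(N_k)$ raised to a large enough fixed power, or argue via the threshold $N_k^{1/4}$, adjusting constants. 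Once this divisibility bookkeeping is pinned down, and the two geometric rates ($\lambda$ for the weights, $C$ for the $N_k$) are chosen with $C$ large relative to $\lambda$, the estimate $(\ref{r:rr})$ and the bound $L_{\max} = \Pi(N_{\max})^{O(1)} = \exp(O(N_{\max})) = \exp(O((1/\delta)^2))$ follow, where $N_{\max} = N_0 C^l = O((1/\delta)\cdot (1/\delta)) = O((1/\delta)^2)$ since $C^l = O(1/\delta)$ when $l = O(\log(1/\delta))$ is chosen so that the decay $\lambda^k N_k^{-1/2}$ has run its course.
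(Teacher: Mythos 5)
Your proposal contains a genuine gap, and it is exactly the difficulty that the paper singles out at the start of Section~5: you cannot control $r_k=q/(q,2L_k^2)$ just by making $L_k$ very composite. The central estimate on which your whole scheme rests is the claim that $q\nmid L_k^2$ forces $r_k\gg N_k$, and hence $|\tau_{L_k}(q)|\ll N_k^{-1/2}$. This is false. Take $q=2^a$ with $a=4\lfloor\log_2 N_k\rfloor+3$, so $q\nmid L_k^2=\Pi(N_k)^4$. Then $(q,2L_k^2)=2^{4\lfloor\log_2 N_k\rfloor+1}$ and $r_k=2^{2}=4$, so $\tau_{L_k}(q)=-1/2$ no matter how large $N_k$ is. More generally, for any fixed power you raise $\Pi(N_k)$ to, taking the $2$-adic order of $q$ to exceed that of $L_k^2$ by $2$ produces $r_k\in\{1,2,4\}$. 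You flag this in passing (``unless $\operatorname{ord}_p(q)$ is enormous'') and propose to ``adjust constants,'' but no choice of fixed power escapes it: the bad $q$ is tuned to $L_k$, not to a constant.

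This matters most in the case you call the ``delicate point,'' where $q$ has a prime-power factor never killed by any $L_k^2$. There you assert $r_k\ge N_k\ge c_3/\delta$ for every $k$ and conclude the average exceeds $-\sqrt{\delta/c_3}$. With $q=2^a$, $a=4\lfloor\log_2 N_l\rfloor+2$, you instead get $\tau_{L_l}(q)=-2^{-1/2}$, and with your choice $\lambda>1$ the weight $\lambda^l/\Lambda\approx(\lambda-1)/\lambda$ on this term does not tend to zero, so the normalized sum is bounded below only by a negative constant, not by $-\delta/2$. Choosing $\lambda>1$ is the wrong direction: the paper takes $\lambda=2^{-1/2}<1$, precisely so that the late indices (where $|\tau|$ can be of order $1$) carry vanishingly small weight after normalization by $\Lambda\approx 1/(1-\lambda)$. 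There is also an internal inconsistency in your argument: you say $\lambda^k N_k^{-1/2}$ ``decays geometrically,'' yet then claim the sum over $k<k^\ast$ is dominated by its \emph{last} term $k=k^\ast-1$; if it decays the first term dominates, and your subsequent normalization by $\Lambda\ge\lambda^{k^\ast-1}$ is then irrelevant.

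What the paper does instead is not to try to make each $\tau_{L_k}(q)$ small but to engineer \emph{cancellation} along the sequence. The engine is Lemma~\ref{l:1}: for a single prime $p$ and $\mu\in[p^{-1/2},1)$, the partial sum $\sum_{j\le m}\mu^j\tau(p^j,p^k)$ over the negative range is dominated, term by term, by a tail of the geometric series $\sum\mu^j$, and the positive terms $\tau=1$ for $j>m$ then eat that tail. This requires the exponent of $p$ in $L_j$ to grow slowly (by $1$ roughly every $\log_2 p$ steps, implemented in Lemma~\ref{l:2}) and the weight to decay at rate exactly $\lambda=2^{-1/2}$, so that the constraint $\mu\ge p^{-1/2}$ is met uniformly in $p$. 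Lemma~\ref{l:3} then reduces a general $q$ to a single prime. Your construction $L_k=\Pi(N_k)^2$ with $N_k$ geometric increments the $p$-exponent by about $4\log_p C$ per step, which is far too fast for small $p$ and breaks the cancellation; this is why you are forced into the (false) pointwise bound $r_k\ge N_k$.
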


We dedicate the entire Section 5\ to the proof of Proposition \ref{p:main3},
as it consists of several steps somewhat combinatorial in character.

We now focus on the error term $E$.

\begin{proposition}
\label{p:main3}Say $\delta >0$ is small enough and $L\geq \exp (1/\delta )$.
Given\ any $x\in \lbrack 0,1]$, there exist constants $1\leq M_{1}\leq
...\leq M_{m}=M_{\max }$ depending only on $L,\delta $ and
constants\thinspace\ $p_{k},q_{k},\varepsilon _{k}$, $k=1,...,m$, where $%
(p_{k},q_{k})=1$ and $\varepsilon _{k}=|x-p_{k}/q_{k}|\,$, such that%
\begin{equation}
\frac{1}{m}\sum_{k=1}^{m}E_{L,M_{k}}(q_{k},\varepsilon _{k})\leq \delta /2%
\text{,}  \label{r:r3}
\end{equation}%
and $M_{\max }=O(L^{c_{3}\cdot 1/\delta }).$
\end{proposition}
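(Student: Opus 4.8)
The plan is to exploit the structure of $E_{L,M}(q,\varepsilon)$, which splits into three contributions: a term $c_1\sqrt{\log q}/\sqrt{M}$ that decays in $M$, a term $c_1\sqrt{q\log q}/M$ that is small as long as $q$ is not too large compared to $M$, and a term $c_1 L^2 M^2\varepsilon$ that is small provided $\varepsilon = |x - p/q|$ is small enough relative to $M$. The freedom we have is the choice, for each $x$, of the rational approximations $p_k/q_k$ together with the scale $M_k$; the key point is that for \emph{any} real $x$ and \emph{any} target denominator size $Q$ there is, by Dirichlet's theorem, a rational $p/q$ with $q \le Q$ and $|x - p/q| \le 1/(qQ) \le 1/Q$. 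So I would set, for $k = 1,\dots,m$, a geometrically increasing sequence of "precision levels" $Q_k$ and let $M_k$ be chosen so that $M_k$ is comfortably larger than $Q_k$ (say $M_k \asymp Q_k^{3}$ or some fixed power), and then take $p_k/q_k$ to be the Dirichlet approximation of $x$ at level $Q_k$, so that $q_k \le Q_k$ and $\varepsilon_k \le 1/(q_k Q_k)$.

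With that choice each of the three pieces of $E_{L,M_k}(q_k,\varepsilon_k)$ becomes controllable: $\sqrt{\log q_k}/\sqrt{M_k}$ and $\sqrt{q_k\log q_k}/M_k$ are both $O(\sqrt{\log Q_k}\,/\,\sqrt{M_k})$ type quantities, hence small once $M_k$ is a large enough power of $Q_k$, and $L^2 M_k^2 \varepsilon_k \le L^2 M_k^2/(q_k Q_k) \le L^2 M_k^2 / Q_k$ is small once $M_k^2/Q_k$ is small compared to $L^{-2}$ — here I would instead choose $M_k$ to be a suitable power of $Q_k$ from \emph{below}, e.g. $M_k \asymp Q_k^{2/3}$, so that $M_k^2/Q_k \asymp Q_k^{1/3} \to \infty$ is wrong; the right balance is $M_k$ polynomially larger than a threshold so that the first two error terms are $\le \delta/4$ while $M_k$ is polynomially smaller than $Q_k$ so that $L^2 M_k^2/Q_k \le \delta/4$. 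Concretely one wants $M_k$ with $M_k \gg (\log Q_k)/\delta^2$ and $M_k^2 \ll \delta Q_k/L^2$; these are simultaneously satisfiable by taking $Q_k$ large (growing with $k$) and $M_k$ in the resulting window, and the lower end of that window forces $M_{\max}$ to be roughly $(Q_{\max})^{1/2}$ where $\log Q_{\max}$ must beat $L^2$-type quantities, yielding $M_{\max} = O(L^{c_3/\delta})$ after optimizing; the averaging over $k=1,\dots,m$ is then essentially a formality since each summand is already $\le \delta/2$.

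There is one genuine subtlety rather than pure bookkeeping: the Dirichlet approximation $p_k/q_k$ at level $Q_k$ can have a \emph{small} denominator $q_k$ (for instance if $x$ itself is rational with small denominator, or very close to one), and then the bound $\varepsilon_k \le 1/(q_k Q_k)$ is weak precisely when $q_k$ is small — but this is the \emph{good} case, because a small $q_k$ makes $\sqrt{q_k\log q_k}/M_k$ small, and the $\varepsilon_k$ term is $L^2 M_k^2\varepsilon_k \le L^2 M_k^2/(q_k Q_k)$, which for $q_k = 1$ is still $L^2 M_k^2/Q_k$, exactly the quantity we already controlled. So the estimate is uniform in $x$; one does not need to split into cases. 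The main obstacle, and the reason the section is nontrivial, is pinning down the exponents: one must choose the growth rate of $Q_k$ (hence $m$) and the window for $M_k$ so that \emph{all three} error terms land below $\delta/4$ for \emph{every} $k$ simultaneously while keeping $M_{\max}$ only polynomial in $L$ of degree $O(1/\delta)$ — getting a worse dependence (e.g. exponential in $L$) would propagate to a worse final bound for $\gamma(n)$. I would therefore carry out the exponent optimization explicitly: fix $M_k$ as the smallest integer exceeding both $C(\log Q_k)/\delta^2$ and $(Q_k \delta / (C L^2))^{1/2}$-type thresholds is the wrong direction — rather fix $M_k$ just above the log-threshold and then choose $Q_k$ just above $C L^2 M_k^2/\delta$, let $Q_k$ (equivalently $M_k$) run over a geometric progression of length $m = O(1/\delta)$ starting near $L$, and read off $M_{\max} = O(L^{c_3/\delta})$. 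The averaging and the verification that each term is $\le \delta/2$ are then routine.
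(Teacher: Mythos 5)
There is a genuine gap, and it is precisely at the point you dismiss as a formality. You assert that $\sqrt{q_k\log q_k}/M_k$ is a ``$O(\sqrt{\log Q_k}/\sqrt{M_k})$ type'' quantity, but Dirichlet's theorem only gives $q_k\le Q_k$, so this term can be as large as $\sqrt{Q_k\log Q_k}/M_k$. Making that small forces $M_k\gg\sqrt{Q_k\log Q_k}$, while making $L^2M_k^2\varepsilon_k\le L^2M_k^2/Q_k$ small forces $M_k\ll\sqrt{\delta Q_k}/L$. These two requirements are incompatible as soon as $L>1$: you cannot have $M_k\gg\sqrt{Q_k}$ and $M_k\ll\sqrt{Q_k}/L$ simultaneously. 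The subtlety you flag (small $q_k$ weakens the $\varepsilon$-bound) is indeed harmless, as you correctly argue, but the opposite regime — $q_k$ comparable to $Q_k$ — is the real obstruction, and your argument does not touch it. Consequently the claim ``each summand is already $\le\delta/2$, so the averaging over $k$ is a formality'' is false; for some $k$ the summand $E_{L,M_k}(q_k,\varepsilon_k)$ cannot be made small.

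The paper's resolution, which is the missing idea, is structural: since the \emph{same} $x$ is approximated at every level $R_1<\cdots<R_m$, the Dirichlet denominators $q'_k$ can be taken nondecreasing in $k$. They cross any fixed threshold (the paper uses $L^{4m}$) at most once, say between indices $n$ and $n+1$. For $k\le n-1$ one uses the approximation $p'_n/q'_n$, which has small denominator ($q'_n\le L^{4m}$, controlling the middle term) \emph{and}, since it is a level-$R_n$ approximation with $R_n\gg L^2M_k^2$, small $\varepsilon$. For $k\ge n+2$ one uses $p'_{n+1}/q'_{n+1}$, whose denominator exceeds $L^{4m}$, so $\varepsilon_{n+1}\le 1/(q'_{n+1}R_{n+1})\le 1/(L^{4m}R_{n+1})$ is tiny, compensating for the larger $q_k$. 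Only the two transition indices $k=n,n+1$ cannot be controlled, and for those one just invokes the trivial bound $E\le 2$ built into the definition of $E_{L,M}$. Averaging over $m\ge 8/\delta$ scales then kills the contribution $2\cdot 2/m\le\delta/2$ of those two bad indices. So the averaging is not bookkeeping — it is exactly what absorbs the unavoidable failure at the transition — and without identifying the monotone structure of the Dirichlet denominators and the ``at most two bad $k$'' observation, the proposal does not close.
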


\begin{proof}
Choose $m$ so that $8/\delta \leq m\leq 9/\delta $. We set \thinspace $%
M_{k}=L^{2(m+k)}$,$\,R_{k}\,\,=L^{4(m+k)}$, $\,k=1,...,m$. For a given $x\in
\lbrack 0,1]$, Let $p_{k}^{\prime }/q_{k}^{\prime }$, $(p_{k}^{\prime
},q_{k}^{\prime })=1$, be the sequence of Dirichlet's approximations of $x$,
i.e. the rationals such that $1\leq q_{k}^{\prime }\leq R_{k}$ and%
\begin{equation*}
\left\vert x-p_{k}^{\prime }/q_{k}^{\prime }\right\vert \leq
1/(q_{k}^{\prime }R_{k})\text{.}
\end{equation*}

We can also assume without loss of generality that $q_{k}^{\prime }$ is an
increasing sequence. Now, let $n$ be the largest index such that $%
q_{n}^{\prime }\leq L^{4m}$ ($n$ can also be $0$)\ We define $%
p_{k}/q_{k}=p_{n}^{\prime }/q_{n}^{\prime }$ for $k\leq n$, $%
p_{k}/q_{k}=p_{n+1}^{\prime }/q_{n+1}^{\prime }$ for $k\geq n+1$, and $%
\varepsilon _{k}=|x-p_{k}/q_{k}|$. We note that for $\delta $ small enough
(independent of $L$), $\log q_{k}\leq \log R_{m}\leq L^{1/2}$.

In the case $k\leq n-1$, using $\varepsilon _{k}\leq 1/R_{n}$ and $q_{k}\leq
L^{4m}$, we get%
\begin{equation*}
E_{L,M_{k}}(q_{k},\varepsilon _{k})\leq
L^{1/2}/M_{k}^{1/2}+L^{2m}L^{1/2}/M_{k}+L^{2}M_{k}^{2}/R_{n}\leq 3L^{-1/2}.
\end{equation*}

In the case $k\geq n+2$, using $\varepsilon _{k}\leq 1/(q_{n+1}R_{n+1})\leq
1/(L^{4m}R_{n+1})$ and $q_{k}\leq R_{n+1}$ we get%
\begin{equation*}
E_{L,M_{k}}(q_{k},\varepsilon _{k})\leq
L^{1/2}/M_{k}^{1/2}+R_{n+1}^{1/2}L^{1/2}/M_{k}+L^{2}M_{k}^{2}/(L^{4m}R_{n+1})\leq 3L^{-1/2}.
\end{equation*}

We conclude that for $\delta $ small enough (independent of $L$), for all $k$
except $k=n,n+1$, $E_{L,M_{k}}(q_{k},\varepsilon _{k})\leq \delta /4\,$\
holds. As for all $k$, $E_{L,M_{k}}(q_{k},\varepsilon _{k})\leq 2$ and $%
m\geq 8/\delta $, we easily obtain (\ref{r:r3}). Finally, $M_{\max
}=L^{4m}=O(L^{c_{3}\cdot 1/\delta })$ with $c_{3}=36$.
\end{proof}

We now complete the proof of Theorem \ref{r:vdc2}. Say $\delta >0$ is given.
We construct the cosine polynomial%
\begin{equation}
T(x)=\frac{1}{m\Lambda }\sum_{j=1}^{l}\sum_{k=1}^{m}\lambda ^{j}\func{Re}%
S(x,L_{j},M_{k}),  \label{r:cos}
\end{equation}%
where the constants $l,L_{1},...,L_{l}=L_{\max }$ are as constructed in
Proposition \ref{t:main2} and the constants $m,M_{1},...,M_{m}=M_{\max }$
are as constructed in Proposition \ref{p:main3} by choosing $L=$ $L_{\max }$%
. Using (\ref{r:est2}), (\ref{r:rr}), (\ref{r:r3}) and the fact that $%
E_{L,M} $ is non-decreasing in $L$, we obtain for each $x\in \lbrack 0,1]$%
\begin{equation*}
T(x)\geq \frac{1}{m\Lambda }\sum_{j=1}^{l}\sum_{k=1}^{m}\lambda ^{j}\left(
\tau _{L_{j}}(q_{k})-E_{L_{\max },M_{k}}(q_{k},\varepsilon _{k})\right) \geq
-\delta /2-\delta /2=-\delta \text{.}
\end{equation*}

The polynomial (\ref{r:cos}) is normed, has non-zero coefficients only at
perfect squares, and the largest non-zero coefficient is at $n=M_{\max
}^{2}L_{\max }^{2}=O(\exp (2c_{2}c_{3}(1/\delta )^{3})$, hence $\delta
=O((\log n)^{-1/3})$.

\section{Exponential sum estimates}

To prove Proposition \ref{p:main1}, we will here adapt classical upper
bounds on $S(x,M)$ based on the Weyl's method, following mostly the approach
and notation from \cite{Montgomery:94}, Section 3. As was mentioned earlier,
we do the adaptation to evaluate precisely the leading term below. Recall
the definition of $\vartheta _{L}(q)$ in (\ref{r:est00}), and then%
\begin{equation*}
\vartheta _{1}(q)=\left\{ 
\begin{array}{cc}
1, & q=1, \\ 
0, & q\equiv 2(\func{mod}4), \\ 
r^{-1/2} & \text{otherwise,}%
\end{array}%
\right.
\end{equation*}%
where $r=q/(q,2)$.

\begin{proposition}
\label{p:main10}If $p/q$ is a rational, $(p,q)=1$, then 
\begin{equation*}
|S(p/q,M)|=\vartheta _{1}(q)+O\left( \sqrt{\log q}/\sqrt{M}+\sqrt{q\log q}%
/M\right) \text{.}
\end{equation*}
\end{proposition}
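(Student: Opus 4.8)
The plan is to follow the classical Weyl differencing argument for the quadratic exponential sum, but to track the main term exactly rather than merely bounding it by $O(q^{-1/2})$. First I would write $|S(p/q,M)|^{2} = M^{-2}\sum_{k,l} e\bigl((k^{2}-l^{2})p/q\bigr)$ and substitute $k = l+h$, so that the inner sum over $l$ (for fixed $h$ with $|h|<M$) becomes a geometric-type sum $\sum_{l} e\bigl((2lh+h^{2})p/q\bigr)$ over an interval of length $M-|h|$. Summing the geometric progression and invoking the standard estimate $\sum_{l=1}^{N} e(l\theta) = \min\{N, \tfrac{1}{2}\|\theta\|^{-1}\}$ type bound, I reduce $|S(p/q,M)|^{2}$ to $M^{-1}$ times a sum over $h$ of $\min\{1, (M\|2hp/q\|)^{-1}\}$, up to controlled error. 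The key point is that $\|2hp/q\| = 0$ exactly when $q \mid 2hp$, i.e.\ (since $(p,q)=1$) when $q \mid 2h$, that is $h$ a multiple of $r = q/(q,2)$. Those ``diagonal'' terms $h \equiv 0 \pmod r$ contribute the main term, while the off-diagonal terms, summed via the usual divisor-type bound $\sum_{1\le a<q/2}(q/a) \ll q\log q$, contribute the error $O(\sqrt{q\log q}/M)$ after dividing by $M$ and taking square roots; the term $\sqrt{\log q}/\sqrt{M}$ comes from the truncation/boundary effects of the Weyl differencing (the range $|h|<M$ versus counting residues).

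The main-term computation is the delicate part. The diagonal $h = rm$ terms give, after the substitutions, a factor proportional to a complete quadratic Gauss sum modulo $q/(q,2)$ — more precisely one is led to evaluate $|G|$ where $G = \sum_{j \bmod r} e(a j^{2}/r)$ for the appropriate $a$ coprime to $r$. Here I would invoke the classical evaluation of quadratic Gauss sums: $|G| = \sqrt{r}$ when $r$ is odd, $|G| = \sqrt{2r}$ when $r \equiv 0 \pmod 4$, and $G = 0$ when $r \equiv 2 \pmod 4$. Normalizing (the $M^{-2}$ and the count of diagonal terms $\asymp M/r$) then yields $|S(p/q,M)|^{2} \to 1/r$ in the generic case, $0$ in the $q \equiv 2 \pmod 4$ case, and, when $q = 1$, the trivial value $1$; taking square roots produces exactly $\vartheta_{1}(q)$ as defined, with $r = q/(q,2)$. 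One must check the three cases of $\vartheta_{1}$ line up with the three cases of the Gauss sum evaluation: $q = 1$ forces $r = 1$ and the sum is identically $1$; $q \equiv 2 \pmod 4$ gives $r \equiv 1 \pmod 2$ but the relevant modulus picks up the vanishing even Gauss sum, forcing $0$; otherwise one gets $r^{-1/2}$.

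I expect the principal obstacle to be bookkeeping the error terms uniformly in $q$ while isolating the main term cleanly — in particular, making sure that the boundary terms from Weyl differencing (the $M - |h|$ versus $M$ discrepancy, and the incomplete sums over $h$ near $\pm M$) are absorbed into $O(\sqrt{\log q}/\sqrt{M} + \sqrt{q\log q}/M)$ and do not corrupt the leading constant. A secondary subtlety is handling the ``partial'' diagonal blocks: for $h$ a multiple of $r$ the sum over $l$ is not quite complete (it runs over $M-|h|$ terms, not a full period), so one needs either a second differencing or a direct estimate showing the incomplete Gauss-type sums deviate from the complete ones by an amount that, once squared and summed, stays within the stated error. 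Since Proposition \ref{p:main10} is the $L=1$ specialization that Proposition \ref{p:main1} will bootstrap from (replacing $q$ by quantities involving $(q,L^{2})$), getting the constant exactly right here is what the later optimization depends on, so I would err on the side of an explicit, if tedious, treatment of these boundary contributions.
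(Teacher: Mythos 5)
Your overall plan — square $S$, Weyl-difference with $h=k-l$, split the $h$-sum into the $q\mid 2h$ diagonal and the complement, bound the off-diagonal geometric sums by $1/(2\|2hp/q\|)$ and then sum these in blocks of length $r=q/(q,2)$ to get $O(q\log q)$ per block — is exactly the paper's argument, and your error-term accounting ($O(M\log q+q\log q)$ inside the square, hence $\sqrt{\log q/M}+\sqrt{q\log q}/M$ after normalizing) is what the paper obtains.

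There is, however, one genuine misconception in your plan for the main term. You assert that the diagonal $h=rm$ terms lead to a complete quadratic Gauss sum $\sum_{j\bmod r}e(aj^{2}/r)$, whose classical evaluation ($\sqrt r$, $\sqrt{2r}$, or $0$) produces $\vartheta_{1}(q)$. But after Weyl differencing that is not what appears. For $h$ a multiple of $r$, the phase $e(h^{2}p/q)$ with $h=rm$ equals $e(r^{2}m^{2}p/q)$, and since $q\mid r^{2}$ when $q\not\equiv 2\pmod 4$ and $r^{2}/q$ is a half-integer with $p$ odd when $q\equiv 2\pmod 4$, this phase is identically $1$ in the first case and is $(-1)^{m}$ in the second. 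No Gauss sum is present; the diagonal sum $\sum_{r\mid h,\,|h|<M}(M-|h|)e(h^{2}p/q)$ evaluates to $M^{2}/r+O(M+q)$ by elementary counting in the first case and telescopes to $O(M+q)$ in the second. This simpler observation is what the paper uses and is what makes the three cases of $\vartheta_{1}$ drop out. Gauss sums do arise if one instead estimates $S(p/q,M)$ directly by splitting $k$ into complete residue blocks modulo $q$ (and the two answers agree because $|G(p,q)|^{2}/q^{2}=1/r$ in the nondegenerate cases), but that is a different route with a different error structure, not the Weyl-differencing route you describe. If you execute your own plan you will find the diagonal analysis is easier than you feared, but you should drop the Gauss-sum framing to avoid chasing a subproblem that does not occur.
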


\begin{proof}
Say $T=M^{2}|S|^{2}$, and then by substituting $h=k-j$ we see that 
\begin{eqnarray}
T
&=&\sum_{k,j=1}^{M}e((k^{2}-j^{2})p/q)=\sum_{h=1-M}^{M-1}%
\sum_{k=1}^{M-|h|}e(h^{2}p/q)e(2khp/q)=  \notag \\
&=&\sum_{q|2h}(M-|h|)e(h^{2}p/q)+\sum_{\rceil
q|2h}e(h^{2}p/q)\sum_{k=1}^{M-|h|}e(2khp/q),  \label{r:hp0}
\end{eqnarray}%
where $h$ in both sums in the second row goes from $1-M$ to $M-1$.

We first estimate the right-hand sum in (\ref{r:hp0}). If $q|2h$ does not
hold, 
\begin{equation}
\left\vert \sum_{k=1}^{M-|h|}e(2khp/q)\right\vert \leq 1/(2\left\Vert
2hp/q\right\Vert ),  \label{r:hp2}
\end{equation}%
where $\left\Vert x\right\Vert $ denotes the distance from $x$ to the
nearest integer. Choose a segment of variables $h$ which are not multipliers
of $r$ and of length $r-1$, and then we deduce that 
\begin{equation}
\sum_{h=kr+1}^{(k+1)r-1}\frac{1}{2\left\Vert 2hp/q\right\Vert }=O(q\log q)
\label{r:hp3}
\end{equation}%
(see e.g. \cite{Montgomery:94}, p.40 for details of evaluating (\ref{r:hp2})
and (\ref{r:hp3})). As there are at most $2M/r+2\leq 4M/q+2$ such segments,\
that, (\ref{r:hp2}) and (\ref{r:hp3}) imply that the absolute value of the
right-hand sum in (\ref{r:hp0}) is at most 
\begin{equation}
O(M\log q+q\log q).  \label{r:hp6}
\end{equation}

To evaluate the left-hand sum in (\ref{r:hp0}), we discuss two cases
depending on the remainder of $q$ $\func{mod}4$.

If $q\equiv 0,1,$ or $3(\func{mod}4)$, then $q|2h$ if and only if $q|h^{2}$,
and then $e(h^{2}p/q)=1$. If we set $r=q/(q,2)$, then $q|2h$ if and only if $%
r|h$ and the left-hand sum in (\ref{r:hp0}) becomes%
\begin{equation}
\sum_{r|h}(M-|h|)=2\sum_{k=1}^{M/r+1}(M-rk)+O(M)=M^{2}/r+O(M)+O(q)\text{.}
\label{r:hp1}
\end{equation}

Summing (\ref{r:hp6}) and (\ref{r:hp1})\ we deduce that $T=M^{2}/r+O(M\log
q+q\log q)$, which completes the proof in these cases.

If $q\equiv 2(\func{mod}4)$, then for $q|2h$, $e(h^{2}p/q)$ alternates
between $\pm 1$. Again $q|2h$ if and only if $r|h$ and the left-hand sum in (%
\ref{r:hp0}) becomes%
\begin{equation}
\sum_{r|h}(M-|h|)e(h^{2}p/q)=2\sum_{k=1}^{M/r+1}(M-rk)(-1)^{k}+O(M)=O(M)+O(q)%
\text{.}  \label{r:hp7}
\end{equation}

Summing (\ref{r:hp6}) and (\ref{r:hp7}) we obtain $T=O(M\log q+q\log q)$
which completes the proof if $q\equiv 2(\func{mod}4)$.
\end{proof}

We now complete the proof of Proposition \ref{p:main1}. Using Proposition %
\ref{p:main10}, relation $S(p/q,L,M)=S(L^{2}p/q,M)$ and the fact that the
error term is non-decreasing in $q$, we easily deduce that%
\begin{equation}
|S(p/q,L,M)|=\vartheta _{L}(q)+O\left( \sqrt{\log q}/\sqrt{M}+\sqrt{q\log q}%
/M\right) \text{.}  \label{r:hp4}
\end{equation}

Now say $|x-p/q|\leq \varepsilon $. As for any $k$ between $1$ and $M$,%
\begin{equation*}
\left\vert e(k^{2}L^{2}x)-e(k^{2}L^{2}p/q)\right\vert \leq 2\pi
k^{2}L^{2}|x-p/q|\leq 2\pi M^{2}L^{2}\varepsilon \text{,}
\end{equation*}%
We deduce that%
\begin{equation}
\left\vert S(x,L,M)-S(p/q,L,M)\right\vert =O(M^{2}L^{2}\varepsilon )\text{.}
\label{r:hp5}
\end{equation}

Combining (\ref{r:hp4}) and (\ref{r:hp5}) we obtain the first part of
Proposition \ref{p:main1}. We note that for any integer $n$, $S(n,M)=1$,
hence if $q|L^{2}$, $S(p/q,L,M)=1$. Combining that and (\ref{r:hp5}) we
obtain the second part of Proposition \ref{p:main1}.

The error term $M^{2}L^{2}\varepsilon $ above is not too good. We would like
to replace the exponential sum $S$ with a weighted exponential sum such that
an analogue of Proposition \ref{p:main1} holds with a better error term,
that means an error term such that the exponent on $M$ is less than twice
the exponent on $\varepsilon $. In that case, averaging over "$M$" and
Proposition \ref{p:main2} would not be required, and the bound in Theorem %
\ref{t:main2} would be improved to $O((\log n)^{-1/2})$. We dedicate the
rest of this section to discussing why two possible approaches do not
achieve that. One approach is choosing weights which simulate Dirichlet's
kernel as in \cite{Pintz:88}, and the other is simulating Fej\'{e}r's kernel.

\textit{Dirichlet's kernel. }The authors in \cite{Pintz:88} worked with
weighted exponential sums, and simulated normed Dirichlet's kernel%
\begin{equation*}
D_{M}(x)=\frac{1}{2M+1}\sum_{k=-M}^{M}e(kx)\text{.}
\end{equation*}%
Instead of $S(x,M)$ they defined the weighted sum $T(x,M)\,$approximating $%
D_{M}(x)$ as%
\begin{equation*}
T(x,M)=\frac{1}{M^{\prime }}\sum_{k=1}^{M}2ke(k^{2}x)\text{,}
\end{equation*}%
where $M^{\prime }$ is chosen so that $T(1,M)=1$. If $\left\vert
x-p/q\right\vert =\varepsilon $, then%
\begin{equation*}
T(x,M)=S(p/q,q)T(\varepsilon ,M)+O((q\log q)^{1/2}(1/M+M\varepsilon ))
\end{equation*}%
(\cite{Pintz:88}, relation (8)). As $T(\varepsilon ,M)=1+O(M^{2}\varepsilon
) $, the error term is essentially the same as in Proposition \ref{p:main1}.
(The authors in \cite{Pintz:88} also use the fact that $T(\varepsilon ,M)$
is close to $0$ when $\varepsilon $ is small but not too small, which is
opposite to our needs. We would wish to bound $T(\varepsilon ,M)$ close to $%
1 $ for small $\varepsilon $).

\textit{Fej\'{e}r's kernel. }Following the idea of I. Ruzsa, one can choose
weights to simulate the normed Fej\'{e}r's kernel%
\begin{equation*}
\Delta _{M}(x)=\frac{1}{M}\sum_{k=-M}^{M}\left( 1-\frac{|k|}{M}\right) e(kx)=%
\frac{1}{M^{2}}\left( \frac{\sin \pi Mx}{\sin \pi x}\right) ^{2}\text{,}
\end{equation*}%
with the purpose to dampen the oscillations of $S$ at integers and rational
numbers with small denominator. Instead of $S(x,M)$ we can define%
\begin{equation*}
V(x,M)=\frac{1}{M^{^{\prime \prime }}}\sum_{k=1}^{M}k\left( 1-\frac{k^{2}}{%
M^{2}}\right) e(k^{2}x)
\end{equation*}%
where $M^{^{\prime \prime }}$ is chosen so that $V(x,M)=1$. One can then
show that, if $\left\vert x-p/q\right\vert =\varepsilon $,%
\begin{equation*}
V(x,M)=S(p/q,q)\Delta _{M^{2}}(\varepsilon )+O(Mq\varepsilon )\text{.}
\end{equation*}%
As for small $\varepsilon $, $\Delta _{k}(\varepsilon )=1+O(k^{2}\varepsilon
^{2})$, we get $\Delta _{M^{2}}(\varepsilon )=1+O(M^{4}\varepsilon ^{2})$.
The error term $M^{4}\varepsilon ^{2}$ which replaces $M^{2}\varepsilon $ in
Proposition \ref{p:main1} in the case $L=1$ is better, but does not enable
us to improve the bound in\ Theorem \ref{t:main2}$.$

\section{Proof of Proposition \protect\ref{p:main3}}

\label{s:prop} \ Recall the definition of $\tau _{L}(q)$ in (\ref{r:deftau}%
). We prove here that we can find a linear combination of various $\tau _{L}$
so that its value for any $q$ is not smaller than $-\delta $ for a given
small $\delta >0$. The difficulty lies in the following. As was explained in
the introduction, choosing a very composite $L$ seems to be enough:\ say $n$
is greater than $1/\delta ^{2}$, and $L$ is the smallest common multiplier
of all numbers between $1$ and $n$. Then for most $q$, $\tau _{L}(q)\geq
-\delta $. Specifically, for numbers $q$ which divide $L^{2}$, $\tau
_{L}(q)=0$, and for numbers $q$ which have a prime factor larger than $n$, $%
\tau _{L}(q)\geq -\delta $. We, however, have no control over behavior of $%
\tau _{L}(q)$ for which $q/(q,2L^{2})$ is small (for example, multipliers of 
$2L^{2}$ with a small number, but also many other cases). This problem
arises for any $L$.

To resolve this and cancel out values of small $q/(q,2L^{2})$, we construct
an approximate geometric sequence of very composite "$L$'s". This idea is
coded in the Lemma \ref{l:1} below. For clarity, we write $\tau (L,q)$
instead of $\tau _{L}(q)$. Note that if $L$ and $q$ have only one common
prime number $p$ in their decompositions, then for $p=2$,%
\begin{equation*}
\tau (p^{j},p^{k})=\left\{ 
\begin{array}{cc}
1, & j-k/2\geq 0, \\ 
0, & j-k/2=-1/2, \\ 
-p^{j+1/2-k/2} & \text{otherwise,}%
\end{array}%
\right.
\end{equation*}%
and for $p\geq 3$, 
\begin{equation*}
\tau (p^{j},p^{k})=\left\{ 
\begin{array}{cc}
1, & j-k/2\geq 0, \\ 
-p^{j-k/2} & \text{otherwise.}%
\end{array}%
\right.
\end{equation*}

\begin{lemma}
\label{l:1}Say $p$ is a prime and $\mu $ a real number such that $1>\mu \geq
p^{-1/2}$. Then for any non-negative integers $n,k$,%
\begin{equation}
\sum_{j=0}^{n}\mu ^{j}\tau (p^{j},p^{k})\geq -\frac{1}{1-\mu }\mu ^{n+1}%
\text{.}  \label{r:maini}
\end{equation}
\end{lemma}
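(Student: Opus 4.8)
The plan is to collapse the inequality to a single extremal case by a monotonicity argument, and then to settle that case by summing an honest geometric series. Write $f(n)=\sum_{j=0}^{n}\mu^{j}\tau(p^{j},p^{k})$ for the left-hand side and set $g(n)=f(n)+\mu^{n+1}/(1-\mu)$, so that the assertion of the lemma is exactly $g(n)\geq0$ for all $n\geq0$. The case $k=0$ is immediate, since then $\tau(p^{j},1)=1$ for every $j$ and $f(n)=(1-\mu^{n+1})/(1-\mu)\geq0$; so assume $k\geq1$ from now on.

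Reading off the two displayed formulas for $\tau(p^{j},p^{k})$, one sees that $\tau(p^{n},p^{k})\leq1$ in all cases, with equality precisely when $2n\geq k$. A one-line computation then gives $g(n)-g(n-1)=\mu^{n}\bigl(\tau(p^{n},p^{k})-1\bigr)\leq0$, so the sequence $\bigl(g(n)\bigr)_{n\geq0}$ is non-increasing; moreover $g(n)=g(n-1)$ as soon as $2n\geq k$. Consequently, with $j_{0}=\lceil k/2\rceil\geq1$, the sequence $g$ is constant on $\{j_{0}-1,j_{0},j_{0}+1,\dots\}$ and non-increasing on all of $\{0,1,2,\dots\}$, whence $g(n)\geq g(j_{0}-1)$ for every $n$. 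It therefore remains only to prove the single inequality $f(j_{0}-1)\geq-\mu^{j_{0}}/(1-\mu)$.

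For this, note that every index $j$ with $0\leq j\leq j_{0}-1$ satisfies $2j\leq 2j_{0}-2\leq k-1<k$, so each $\tau(p^{j},p^{k})$ occurring in $f(j_{0}-1)$ falls in the ``otherwise'' branch and equals $-p^{j-k/2}$ when $p\geq3$, or $-2^{j+1/2-k/2}$ when $p=2$ --- with the one exception that for $p=2$ and $2j=k-1$ it equals $0$. Replacing each term by the corresponding negative value, but simply discarding the exceptional $0$, shows that $f(j_{0}-1)$ is at least $-c\sum_{j}(p\mu)^{j}$ with $c=p^{-k/2}$ if $p\geq3$ and $c=2^{(1-k)/2}$ if $p=2$; since $p\mu\geq p^{1/2}>1$ this geometric sum is dominated by its last term. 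Summing it and inserting $j_{0}=\lceil k/2\rceil$ (treating $k$ even and $k$ odd separately), the required bound reduces in each case to an elementary one-variable inequality equivalent to $\mu p^{1/2}\geq1$; for instance, for $p\geq3$ one needs $p^{1/2}(1-\mu)\leq p\mu-1$, which factors as $(p^{1/2}+1)(\mu p^{1/2}-1)\geq0$ and so holds by hypothesis.

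The only delicate point --- and the one I expect to be the main obstacle --- is the bookkeeping in this last step. The prime $p=2$ genuinely requires separate treatment, because there the relevant Gauss-sum value vanishes rather than merely being $\leq p^{-1/2}$; and when $p=2$ with $k$ odd, the vanishing top term $j=j_{0}-1$ must be discarded outright rather than lower-bounded by the negative value it would otherwise carry, since folding it into the geometric estimate would cost an extra factor $2$ and break the bound $\mu\geq2^{-1/2}$ by a hair. Once the parities and the case $p=2$ are separated, the remaining inequalities are all routine.
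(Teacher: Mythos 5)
Your proof is correct, and it takes a genuinely different route from the paper's in its second half. Both arguments reduce matters to bounding the ``bad'' partial sum up to index roughly $\lceil k/2\rceil-1$: the paper names $m$ the largest index $\leq n$ with $m-k/2<0$ and observes that the later terms are all $+\mu^{j}$, while you reach the same reduction via the non-increasing telescope $g(n)-g(n-1)=\mu^{n}\bigl(\tau(p^{n},p^{k})-1\bigr)\leq 0$ together with the stabilization of $g$ once $2n\geq k$; your framing is tidy and absorbs the sub-case $n<\lceil k/2\rceil-1$ automatically. The real divergence is in how the critical sum $-\sum_{j=0}^{m}\mu^{j}p^{j-k/2}$ is bounded. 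The paper proceeds term by term: using $m-k/2\leq -1/2$ and the key substitution $-p^{-c}\geq -\mu^{2c}$ (legitimate since $\mu\geq p^{-1/2}$) it turns each summand into a pure power of $\mu$, so the whole thing collapses to $-\sum_{i=m+1}^{2m+1}\mu^{i}\geq -\sum_{i\geq m+1}\mu^{i}$, matching the target immediately. You instead sum the geometric series $\sum(p\mu)^{j}$ in closed form and compare closed-form expressions, which after the four-way split on $p$ and the parity of $k$ funnels into a scalar inequality implied by $\mu p^{1/2}\geq 1$; for $p\geq 3$, $k$ odd it is literally equivalent via the factorization $(p^{1/2}+1)(\mu p^{1/2}-1)\geq 0$, while for $k$ even the needed inequality $(p+1)\mu\geq 2$ is in fact slightly weaker, so ``equivalent in each case'' mildly overstates the matter. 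Both proofs discard, rather than lower-bound, the vanishing term at $p=2$, $2j=k-1$; as you note, folding it into the geometric estimate would require $\mu\geq 3/4$, which $\mu\geq 2^{-1/2}$ does not supply. The paper's substitution trick is slicker and keeps the parity cases more uniform; your explicit summation is more elementary and exposes the extremal configuration ($n=\lceil k/2\rceil-1$, $\mu=p^{-1/2}$, $k$ odd) at the cost of more bookkeeping.
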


\begin{proof}
The case $k=0$ is trivial, so say $k\geq 1$. Denote the left-hand side of (%
\ref{r:maini}) with $A_{n}(p,k)$. Say $m$ is the largest index between $0$
and $n$ such that $m-k/2<0$, hence $m-k/2\leq -1/2$. We evaluate $A_{m}(p,k)$
in three cases. If $p\geq 3$, then using first $m-k/2\leq -1/2$ and then $%
-p^{-c}\geq -\mu ^{2c}$ for $c\geq 0$, we get%
\begin{eqnarray*}
A_{m}(p,k) &=&-\sum_{j=0}^{m}\mu ^{j}p^{j-k/2}\geq -\sum_{j=0}^{m}\mu
^{j}p^{j-m-1/2}\geq -\sum_{j=0}^{m}\mu ^{2m+1-j}= \\
&=&-\sum_{j=m+1}^{2m+1}\mu ^{j}\geq -\sum_{j=m+1}^{\infty }\mu ^{j}\text{.}
\end{eqnarray*}

Now assume $p=2$ and $m-k/2=-1/2$. As $\tau (p^{m},p^{k})=0$, similarly as
above we deduce that%
\begin{eqnarray*}
A_{m}(p,k) &=&-\sum_{j=0}^{m-1}\mu ^{j}p^{j+1/2-k/2}=-\sum_{j=0}^{m-1}\mu
^{j}p^{j-m}\geq -\sum_{j=0}^{m-1}\mu ^{2m-j}= \\
&=&-\sum_{j=m+1}^{2m}\mu ^{j}\geq -\sum_{j=m+1}^{\infty }\mu ^{j}\text{.}
\end{eqnarray*}

Finally, if $p=2$ and $m-k/2\leq -1$, repeating at the end the last couple
of steps as in the case $p\geq 3$ we obtain%
\begin{equation*}
A_{m}(p,k)=-\sum_{j=0}^{m}\mu ^{j}p^{j+1/2-k/2}\geq -\sum_{j=0}^{m}\mu
^{j}p^{j-m-1/2}\geq -\sum_{j=m+1}^{\infty }\mu ^{j}\text{.}
\end{equation*}

Inserting that in $A_{n}(p,k)$, we see that most of the terms cancel out:%
\begin{equation*}
A_{n}(p,k)=A_{m}(p,k)+\sum_{j=m+1}^{n}\mu ^{j}\geq -\sum_{j=n+1}^{\infty
}\mu ^{j}=-\frac{1}{1-\mu }\mu ^{n+1}\text{.}
\end{equation*}
\end{proof}

In the next step, we will fix the weights so that they do not depend on the
prime $p$. For clarity of the argument and notation, we write $\lambda
=2^{-1/2}$.

\begin{lemma}
\label{l:2}Say $l>0$ is an integer. For each prime number $p$ there exist
integers $0=d_{0}\leq d_{1}\leq ...\leq d_{l}$ such that for any integer $%
k\geq 0$,%
\begin{equation}
\sum_{j=0}^{l}\lambda ^{j}\tau (p^{d_{j}},p^{k})\geq -5\lambda ^{l}\text{,}
\label{r:hb}
\end{equation}

and 
\begin{equation}
p^{d_{l}}<2^{2l}.  \label{r:pmax}
\end{equation}
\end{lemma}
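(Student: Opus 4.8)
The plan is to build the exponents $d_0\le d_1\le\cdots\le d_l$ so that, for a single prime $p$, the weighted sum behaves like the telescoping sum of Lemma \ref{l:1}, but with a fixed ratio $\lambda=2^{-1/2}$ independent of $p$. Since Lemma \ref{l:1} is phrased with a weight $\mu$ satisfying $\mu\ge p^{-1/2}$, and here we insist on the uniform weight $\lambda=2^{-1/2}$ which may be \emph{smaller} than $p^{-1/2}$ only when $p=2$ (there $\lambda=p^{-1/2}$ exactly, so it is admissible) and is $\ge p^{-1/2}$ for every $p\ge 2$, the weight $\lambda$ is always legitimate for Lemma \ref{l:1}. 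The difficulty is that a single geometric run $d_j=j$ gives the bound $-\frac{1}{1-\lambda}\lambda^{n+1}$, which is of the right shape, but the constant $\frac{1}{1-\lambda}=\frac{1}{1-2^{-1/2}}\approx 3.41$ is not quite $5$ after we also pay for the coarsening $d_j\ne j$; so I first need to see how badly the spacing hurts us, then absorb everything into the constant $5$.

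First I would treat the trivial ranges. For $k=0$ every $\tau(p^{d_j},p^0)=\tau(p^{d_j},1)=1$, so the left side of \eqref{r:hb} is $\sum_{j=0}^l\lambda^j>0>-5\lambda^l$; done. For $k$ so large that even $p^{d_l}$ fails to dominate $p^{k/2}$, i.e.\ $d_l-k/2<0$ throughout, the sum is $-\sum_{j=0}^l\lambda^j p^{d_j-k/2}$ (with the $p=2$ caveat that one term may vanish), which we again bound by a geometric tail exactly as in the proof of Lemma \ref{l:1}. The substantive case is the ``crossover'' range, where some initial block of indices has $d_j-k/2<0$ and the rest has $d_j-k/2\ge 0$. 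Let $m$ be the largest index with $d_m-k/2<0$. Then
\begin{equation*}
\sum_{j=0}^l\lambda^j\tau(p^{d_j},p^k)
= -\sum_{j=0}^m\lambda^j p^{d_j-k/2}\;(\text{or with one zero term if }p=2)\;+\;\sum_{j=m+1}^l\lambda^j,
\end{equation*}
and the positive tail $\sum_{j=m+1}^l\lambda^j\ge 0$ can be dropped. So everything reduces to showing $\sum_{j=0}^m\lambda^j p^{d_j-k/2}\le 5\lambda^l$.

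The key choice: take $d_j=j$ for $0\le j\le l'$ and then let $d_j$ grow fast enough (e.g.\ doubling, or $d_j = 2^{\,j-l'}l'$) for $j>l'$ so that \eqref{r:pmax} holds — concretely one can even just take $d_j=j$ for all $j$, giving $p^{d_l}=p^l\le\ldots$ which is \emph{not} $<2^{2l}$ for large $p$, so the geometric-then-accelerating split is genuinely needed to satisfy \eqref{r:pmax}; the honest choice is something like $d_j=\lfloor 2^{j}\rfloor$ for small $j$ and capped, arranged so $p^{d_l}<2^{2l}$, which forces $d_l<2l/\log_2 p\le 2l$. With $d_j-k/2\le d_m-k/2\le -1/2$ for $j\le m$ and the ratios $d_{j}-d_{j-1}$ bounded, the bound $p^{d_j-k/2}\le p^{d_j-d_m-1/2}\le \lambda^{2(d_m-d_j)+1}$ (using $p^{-1}\le\lambda^2$) turns $\sum_{j=0}^m\lambda^j p^{d_j-k/2}$ into $\sum_{j=0}^m\lambda^{\,j+2(d_m-d_j)+1}$; since $j+2(d_m-d_j)\ge 2d_m-j\ge d_m\ge m$ when $d_j\ge j$, this is at most $\sum_{i\ge m+1}\lambda^i=\frac{\lambda^{m+1}}{1-\lambda}$, and then $\frac{\lambda^{m+1}}{1-\lambda}\le\frac{\lambda^{m+1}}{1-2^{-1/2}}\le 4\lambda^{m+1}\le 4\lambda^{\,d_m}\le\ldots$; the last link, bounding $\lambda^{d_m}$ by a constant times $\lambda^l$, is where the accelerated tail pays off, since once $d$ starts doubling, $d_m$ cannot be much smaller than $d_l$ without many indices remaining, contradicting maximality of $m$ — this bookkeeping is exactly where the constant balloons from $3.41$ to $5$.

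The main obstacle I expect is \textbf{reconciling the two constraints at once}: the lower bound \eqref{r:hb} wants the $d_j$ to grow slowly and geometrically (so the telescoping of Lemma \ref{l:1} kicks in with a good constant), while the size bound \eqref{r:pmax} wants $d_l$ small, i.e.\ $d_j$ to grow fast so as not to over-sample. The resolution is a two-phase schedule — linear growth $d_j=j$ while $j$ is small (where we need the fine telescoping near the crossover) and then rapid (doubling) growth afterwards to keep $p^{d_l}<2^{2l}$ — together with the observation that the crossover index $m$, being maximal, always lands in a region where we still have enough remaining indices $l-m$ to make $\lambda^{m}=O(\lambda^{l})$ fail gracefully into the slack constant $5$. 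Verifying that the accelerated phase does not itself create a bad crossover term (it cannot, because there the exponents $d_j-k/2$ are either all negative — handled by the pure geometric-tail estimate — or we are back in the linear phase) is the routine-but-fiddly part; the conceptual content is entirely in the schedule design and the maximality-of-$m$ argument.
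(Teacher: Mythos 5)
Your proposal contains a fundamental directional error in the schedule design. You correctly observe that the two constraints \eqref{r:hb} and \eqref{r:pmax} pull against each other, but you resolve the tension backwards. You write that \eqref{r:pmax} ``wants $d_l$ small, i.e.\ $d_j$ to grow fast'' --- but these are opposites: fast growth makes $d_l$ \emph{large}. Your proposed ``linear then accelerating'' (doubling) schedule has $d_j\ge j$, hence $p^{d_l}\ge p^l\ge 2^{2l}$ already for $p\ge 5$, violating \eqref{r:pmax}, and the doubling phase only makes this worse. Your own observation that $d_j=j$ fails \eqref{r:pmax} for large $p$ should have pointed toward growing \emph{slower}, not faster.

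The paper's schedule is a slowly climbing staircase: with $e=\lfloor\log_2 p\rfloor$ (so $2^e\le p<2^{e+1}$), each exponent value is held fixed for $e$ consecutive indices --- $d_0=\cdots=d_{e-1}=0$, $d_e=\cdots=d_{2e-1}=1$, and so on --- giving $d_l=\lfloor l/e\rfloor$ and $p^{d_l}<2^{(e+1)\lfloor l/e\rfloor}\le 2^{2l}$. This same choice is precisely what makes Lemma~\ref{l:1} applicable with a uniform $\lambda$: grouping the $e$ equal terms in each block, the total weight on $\tau(p^i,p^k)$ is $\lambda^{ie}(1+\lambda+\cdots+\lambda^{e-1})$, so the effective geometric ratio is $\mu=\lambda^e=2^{-e/2}\ge p^{-1/2}$, exactly the threshold in Lemma~\ref{l:1}. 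In other words, the width $e\approx\log_2 p$ of the staircase does both jobs simultaneously, and this insight is absent from your argument. Your remaining steps also assume $d_j\ge j$, which is false for the correct staircase (there $d_j=\lfloor j/e\rfloor\le j$), so even replacing the schedule would require reworking the estimates. Finally, the constant $5$ in \eqref{r:hb} comes from a second case analysis in the paper (whether $\tau(p^{d_l},p^k)\le 0$, giving $2\lambda/(1-\lambda)\approx 4.83$), not from any ``accelerated tail'' bookkeeping.
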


\begin{proof}
Let $e$ be an integer such that $2^{e+1}>p\geq 2^{e}$. Dividing $l$ with $e$
we get the quotient $f$ and the remainder $g$, $l=f\cdot e+g$. We define
coefficients $d_{j}$ so that $d_{0}=d_{1}=...=d_{e-1}=0$, and every $e$
coefficients we increase it by $1$ until we reach $f\cdot e$, and then $%
d_{f\cdot e}=...=d_{f\cdot e+g}=f$.

We denote the left-hand side of (\ref{r:hb}) with $B_{l}(p,k)$, and we set $%
\mu =2^{-e/2}=\lambda ^{e}$. We note that $\mu \geq p^{-1/2}$, apply Lemma %
\ref{l:1} and deduce that%
\begin{eqnarray}
B_{f\cdot e-1}(p,k) &=&\sum_{j=0}^{f-1}(1+\lambda +...+\lambda ^{e-1})\mu
^{j}\tau (p^{j},p^{k})\geq  \notag \\
&\geq &-\frac{1+\lambda +...+\lambda ^{e-1}}{1-\mu }\mu
^{f}=-\sum_{j=ef}^{\infty }\lambda ^{j}\text{.}  \label{r:b11}
\end{eqnarray}%
We analyse two cases. Say first $\tau (p^{f},p^{k})=1$, and then using (\ref%
{r:b11}) we get%
\begin{equation*}
B_{l}(p,k)=B_{f\cdot e-1}(p,k)+\sum_{j=ef}^{l}\lambda ^{j}\geq
-\sum_{j=l+1}^{\infty }\lambda ^{j}=-\frac{\lambda }{1-\lambda }\lambda ^{l}%
\text{.}
\end{equation*}%
Now say $\tau (p^{f},p^{k})\leq 0$, and then for all $j\leq f-1$, $\tau
(p^{j},p^{k})=p^{-1/2}\tau (p^{j},p^{k-1})$. The function $\tau
(p^{f},p^{k}) $ is always greater or equal than $-p^{-1/2}$. Using that, (%
\ref{r:b11}) and $-p^{-1/2}\geq -\mu =-\lambda ^{e}$, and finally $ef+e\geq
l+1$, we deduce that%
\begin{eqnarray*}
B_{l}(p,k) &=&p^{-1/2}B_{f\cdot e-1}(p,k-1)-\sum_{j=ef}^{l}\lambda
^{j}p^{-1/2}\geq \\
&\geq &-\sum_{j=ef+e}^{\infty }\lambda ^{j}-\sum_{j=ef+e}^{l+e}\lambda
^{j}\geq -\frac{2\lambda }{1-\lambda }\lambda ^{l}\text{.}
\end{eqnarray*}%
As $2\lambda /(1-\lambda )<5$, (\ref{r:hb}) holds. The relation (\ref{r:pmax}%
) follows from $p^{d_{l}}=p^{f}<2^{ef+f}\leq 2^{2ef}\leq 2^{2l}$.
\end{proof}

We now show why it is enough to study only primes.

\begin{lemma}
\label{l:3}Say $L_{1},...,L_{l}$ is a sequence of integers such that $%
L_{j}|L_{j+1}$ for all $j=1,...,l\,$. Then for each integer $q$, there
exists a prime $p$ such that for all $j$,%
\begin{equation}
\tau (L_{j},q)\geq \tau (p^{d_{j}},p^{k})\text{,}  \label{r:prim2}
\end{equation}%
where $p^{d_{j}}$, $p^{k}$ are factors in the prime decomposition of $L_{j}$%
, $q$ respectively.
\end{lemma}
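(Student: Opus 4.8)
The plan is to reduce the multiplicative quantity $\tau(L_j,q)$ to a single prime by exploiting the product structure hidden in the definition \eqref{r:deftau}. First I would write the prime factorisations $L_j=\prod_p p^{d_j(p)}$ and $q=\prod_p p^{k(p)}$, and unwind the definition of $\tau$ in terms of $r=q/(q,2L^2)$. Since $(q,2L^2)=\prod_p p^{\min(k(p),\,v_p(2)+2d_j(p))}$, the exponent of $p$ in $r$ is $\max\bigl(0,\,k(p)-2d_j(p)-v_p(2)\bigr)$, which depends on each prime separately. So $\tau(L_j,q)$ is governed by the local quantities: it equals $1$ exactly when $r=1$, i.e.\ when $k(p)\le 2d_j(p)+v_p(2)$ for every $p$; it equals $0$ when $r$ has a single factor $2$ to the first power and is otherwise $1$ (the ``$\equiv 2 \pmod 4$'' case); and otherwise it is $-r^{-1/2}=-\prod_p p^{-e_j(p)/2}$ where $e_j(p)$ is the exponent of $p$ in $r$. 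The point is that $\tau(p^{d_j(p)},p^{k(p)})$ as defined just before the lemma is precisely the ``one-prime'' version of this, so $|\tau(L_j,q)|\le \prod_p |\tau(p^{d_j(p)},p^{k(p)})|$, with each factor in $[0,1]$.

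Next I would choose the prime $p$ that makes \eqref{r:prim2} hold simultaneously for all $j$. The natural candidate is the prime $p$ for which the local exponent $k(p)$ is ``largest relative to $2d_l(p)$'', say the prime maximising $k(p)-2d_l(p)$ (the last, largest $L_l$); ties can be broken by taking the largest such $p$, or the one with largest $k(p)$. I then have to check the three regimes of $\tau(L_j,q)$ against the one-prime values $\tau(p^{d_j},p^{k})$. If $\tau(L_j,q)=1$ there is nothing to prove since $\tau$ is bounded above by $1$. If $\tau(L_j,q)=0$ I must show $\tau(p^{d_j},p^{k})\le 0$; here the ``$\equiv 2\pmod 4$'' hypothesis forces the exponent of $2$ in $r$ to be exactly $1$, so for the chosen $p$ either $p=2$ with $d_j$ exactly one below the threshold (giving $\tau=0$) or $r$ has a genuine factor at $p$ (giving $\tau<0$); the choice of $p$ as the extremal prime guarantees the latter unless $p=2$. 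Finally, in the generic case $\tau(L_j,q)=-r^{-1/2}=-\prod_q p^{-e_j(p)/2}$, and since every factor lies in $(0,1]$, we have $-r^{-1/2}\ge -p^{-e_j(p)/2}=\tau(p^{d_j},p^{k})$ as soon as $p$ actually divides $r$ with the right parity of exponent — and for the extremal prime it does, provided we also verify that the monotonicity $d_1\le\cdots\le d_l$ together with $L_j\mid L_{j+1}$ makes the choice of $p$ consistent across all $j$ (i.e.\ the same $p$ works for every index, not a different one per $j$).

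The main obstacle I expect is exactly this last consistency point, compounded by the special role of $p=2$. Because the ``$0$'' branch of $\tau$ is triggered by a condition on $q/(q,L^2)$ (not $q/(q,2L^2)$) being $2\bmod 4$, the factor of $2$ is handled asymmetrically, and one must be careful that the prime selected for the generic indices $j$ does not fail the inequality at an index where $\tau(L_j,q)$ jumps into the $0$-branch. I would handle this by splitting into the case where the extremal prime is odd — where the argument is clean because the $2$-adic subtleties never enter the chosen coordinate — and the case where $2$ is the only prime that can be extremal, where one checks the three-line table for $\tau(2^j,2^k)$ stated before the lemma directly against the definition of the $0$-branch. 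The divisibility chain $L_j\mid L_{j+1}$ is what makes ``the same $p$ for all $j$'' possible: it ensures $d_1(p)\le d_2(p)\le\cdots$ so that once $r$ acquires a factor at $p$ for some index it keeps a factor of at least the same parity-type for all smaller indices, which is precisely what lines up the one-prime $\tau(p^{d_j},p^{k})$ underneath $\tau(L_j,q)$.
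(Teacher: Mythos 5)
Your overall strategy — reducing $\tau(L_j,q)$ to a single-prime quantity $\tau(p^{d_j},p^k)$ and exploiting $-r^{-1/2}\ge -p^{-e/2}$ when $p^e\mid r$ — is the right one and matches the paper's proof. You also correctly flag the key consistency requirement: the same $p$ must divide $r_j=q/(q,L_j^2)$ at every index $j$ where $\tau(L_j,q)<1$, or else $\tau(p^{d_j},p^k)=1$ there and \eqref{r:prim2} fails. But your proposed choice of $p$ does not secure this. You base the selection on $L_l$ (maximizing $k(p)-2d_l(p)$), whereas the relevant constraint lives at the \emph{critical index} $m$, the largest $j$ with $q\nmid L_j^2$. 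If $q\mid L_l^2$ while $q\nmid L_m^2$ for some $m<l$, your extremal prime may not divide $r_m$ at all. A concrete failure: take odd primes $p',p''$ with $q=p'^3p''^2$, $L_1=1$, $L_2=p'p''$, $L_3=p'^{10}p''$. Then $q\mid L_3^2$ but $q\nmid L_2^2$, and $r_2=q/(q,L_2^2)=p'$, so $\tau(L_2,q)=-(p')^{-1/2}$. Your rule compares $k(p')-2d_3(p')=3-20=-17$ with $k(p'')-2d_3(p'')=2-2=0$ and selects $p''$; but $\tau\bigl((p'')^{d_2},(p'')^{k(p'')}\bigr)=\tau\bigl(p'',(p'')^2\bigr)=1$, so \eqref{r:prim2} fails at $j=2$.

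The fix, and what the paper actually does, is to choose $p$ among the prime divisors of $r_m=q/(q,L_m^2)$ where $m$ is the largest index with $q\nmid L_m^2$, taking $p=2$ when $r_m\equiv 2\pmod{4}$ (both to guarantee an admissible prime exists and to align with the $0$-branch of $\tau$). Since $L_j\mid L_m$ for $j\le m$, any such $p$ automatically divides every $r_j$ with $j\le m$, and for $j>m$ both sides of \eqref{r:prim2} equal $1$. With that corrected choice of prime, the case analysis you sketch — including the separate $2$-adic treatment of the $0$-branch — does go through.
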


\begin{proof}
Let $m+1$ be the smallest index such that $q|L_{m+1}^{2}$ (if there is no
such $m$, we set $m=l)$. If $m=0$, $q|L_{k}$ for all $k$, and we choose any
prime $p$ in the prime decomposition of $q$. Now say $1\leq m\leq l$, and
let $r=q/(q,L_{m}^{2})$. If $r\equiv 2(\func{mod}4)$, we set $p=2$,
otherwise we choose any prime $p$ in the prime decomposition of $r$. For $%
k\geq m+1$, both sides of (\ref{r:prim2}) are equal to 1. For $k\leq m$, it
is straightforward to check (\ref{r:prim2}).
\end{proof}

We now finally construct all variables in Proposition \ref{p:main3}. Choose $%
l$ so that $\delta /20\leq 2^{-l/2}\leq \delta /10$, and let $\lambda
=2^{-1/2}$, $\Lambda =\sum_{j=0}^{l}2^{-j/2}$. We set $n=2^{l}$. Let $%
2=p_{1}<p_{2}<...<p_{s}<n$ be all the prime numbers between $1$ and $n$, and
let $d_{j}^{i}$ be the exponents constructed in Lemma \ref{l:2}, associated
to the prime $p_{i}$, \thinspace $i=1,...,s$, $j=0,...,l$. We set%
\begin{equation*}
L_{j}=\prod_{i=1}^{s}p_{i}^{d_{j}^{i}}\text{.}
\end{equation*}%
Now applying Lemma \ref{l:3}\thinspace and then Lemma \ref{l:2} we deduce
that for any $q\in \boldsymbol{N}$,%
\begin{equation*}
\frac{1}{\Lambda }\sum_{j=0}^{l}\lambda ^{j}\tau (L_{j},q)\geq \frac{1}{%
\Lambda }\sum_{j=0}^{l}\lambda ^{j}\tau (p^{d_{j}},p^{k})\geq -5\cdot
2^{-l/2}\geq -\delta /2\text{.}
\end{equation*}

To complete the proof of Proposition \ref{p:main3}, we only need to estimate 
$L_{\max }=L_{l}$.

\begin{lemma}
\label{l:4}If $K$ is the smallest common multiplier of all numbers between $%
1 $ and $n$, then $K\leq \exp (c_{5}\cdot n)$, $c_{5}=1.04$.
\end{lemma}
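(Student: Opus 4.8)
The statement is the classical Chebyshev-type estimate $\operatorname{lcm}(1,2,\dots,n) \leq e^{c_5 n}$, and the natural route is to note that $\log \operatorname{lcm}(1,\dots,n) = \psi(n)$, the second Chebyshev function $\psi(n) = \sum_{p^a \leq n} \log p$. Indeed, the exact power of a prime $p$ dividing $K = \operatorname{lcm}(1,\dots,n)$ is $\lfloor \log_p n \rfloor$, so $\log K = \sum_{p \leq n} \lfloor \log_p n\rfloor \log p = \sum_{p^a \leq n}\log p = \psi(n)$. Thus the lemma is equivalent to the bound $\psi(n) \leq c_5 n$ with $c_5 = 1.04$, which is a standard effective form of the prime number theorem (indeed weaker than PNT, which gives $\psi(n) \sim n$); one could simply cite it, but since an explicit small constant is wanted, I would give the elementary argument.

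First I would recall the elementary proof via the middle binomial coefficient. Since $\binom{2m}{m} \leq 2^{2m}$ and every prime $p$ with $m < p \leq 2m$ divides $\binom{2m}{m}$ exactly once, we get $\prod_{m < p \leq 2m} p \leq \binom{2m}{m} \leq 4^m$, hence for the first Chebyshev function $\vartheta(n) = \sum_{p \leq n}\log p$ one obtains $\vartheta(2m) - \vartheta(m) \leq 2m \log 2$. Summing this telescoping-style inequality over $m = n, n/2, n/4, \dots$ (dyadically) yields $\vartheta(n) \leq 2n \log 2 \approx 1.386\, n$, which is already an effective linear bound. To pass from $\vartheta$ to $\psi$, I would use the identity $\psi(n) = \sum_{k \geq 1}\vartheta(n^{1/k})$; the terms with $k \geq 2$ contribute only $O(\sqrt{n}\log n)$, which is absorbed into the constant for $n$ large, and the finitely many small cases are checked directly.

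To actually reach the sharp-ish constant $c_5 = 1.04$ one must be a little more careful than the crude $2\log 2$; the cleanest elementary way is to track the factors of $\binom{2m}{m}$ more precisely (the full prime factorization, not just primes in $(m,2m]$) and optimize, or alternatively to invoke a known explicit bound such as $\psi(n) < 1.04\, n$ for all $n \geq 1$ (this is within the range of the Rosser–Schoenfeld type estimates). I would state that $\psi(n) \le 1.04\,n$ holds for all $n\ge 1$, citing the standard reference, and then conclude $K = e^{\psi(n)} \leq e^{1.04\, n}$. The only mild obstacle is bookkeeping the small-$n$ cases and the contribution of prime powers when chasing the explicit constant; everything else is routine. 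Given that the rest of the paper only needs $\log L_{\max} = O(1/\delta^2)$, the precise value of $c_5$ is immaterial, so I would not belabor the optimization.
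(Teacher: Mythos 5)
Your proposal is correct and, in its final step, is exactly what the paper does: Lemma \ref{l:4} is proved there by a one-line citation to Rosser and Schoenfeld (Theorem 12), which gives $\psi(n) < 1.03883\,n$ for all $n>0$, so $K = e^{\psi(n)} \le e^{1.04\,n}$. Your identification $\log K = \psi(n)$ and the decision to invoke the explicit Rosser--Schoenfeld bound match the paper's argument; the additional elementary sketch via $\binom{2m}{m}$ only yields $c_5 = 2\log 2 \approx 1.386$, which you correctly note is weaker, and you also correctly observe that the precise constant is immaterial since the paper only needs $\log L_{\max} = O(1/\delta^2)$.
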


\begin{proof}
This is \cite{Rosser:62}, Theorem 12.
\end{proof}

We now see that (\ref{r:pmax}) implies that $L_{\max }\leq K^{2}$, and by
Lemma \ref{l:4}, $K\leq \exp ^{c_{6}(1/\delta )^{2}}$, $c_{6}=1.04\cdot 400$.

\section{Positive definite functions vanishing of squares}

Now we discuss an application of Theorem \ref{t:main2} to positive definite
functions on $\boldsymbol{Z}/n\boldsymbol{Z}$ vanishing of squares.

We say that a number $\alpha \in \boldsymbol{Z}/n\boldsymbol{Z}$ is a
perfect square, if $\alpha \equiv \pm k^{2}(\func{mod}n)$ for some integer $%
k $, $k^{2}<n/2$. The fact that the set of squares is a Poincar\'{e} set
with estimates obtained in \cite{Pintz:88} can be interpreted as follows:

\begin{theorem}
\label{t:sfpss}\textbf{S\'{a}rk\"{o}zy, Furstenberg, Pintz, Steiger, Szem%
\'{e}redi. }If $A\subset \boldsymbol{Z}/n\boldsymbol{Z}$ such that $%
|A|/n\geq d_{4}(\log n)^{-d(n)}$, $d(n)=d_{5}\log \log \log \log n$, then $%
A-A$ contains a perfect square.
\end{theorem}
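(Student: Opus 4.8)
The plan is to deduce Theorem \ref{t:sfpss} from Theorem \ref{t:main2} by a standard transference between the ``continuous'' van der Corput / positive-definiteness framework on $\boldsymbol{T}$ used throughout the paper and the finite cyclic group $\boldsymbol{Z}/n\boldsymbol{Z}$. First I would record the dual formulation: a set $A\subset\boldsymbol{Z}/n\boldsymbol{Z}$ with $A-A$ avoiding every nonzero perfect square (in the sense defined just above the statement) gives rise, via $f=1_{A}*1_{-A}/|A|$ or equivalently via the measure $\nu$ that places mass $|A\cap(A+t)|/|A|$ suitably normalized, to a nonnegative function on $\boldsymbol{Z}/n\boldsymbol{Z}$ whose Fourier transform is supported off the squares and with a controlled value at $0$; the density $|A|/n$ then becomes exactly the quantity that the polynomials in $\mathcal{T}(Q_{n})$ bound from below. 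Concretely, pairing such an $A$ against the cosine polynomial $T\in\mathcal{T}(Q_{m})$ constructed in Section 3 (with $m\asymp n$), the spectrum condition kills all cross terms except the free coefficient, and one is left with an inequality of the shape $|A|/n\le a_{0}+o(1)=O((\log n)^{-1/3})$. This is the same $\gamma=\delta$ duality already invoked in Section 2, now read on the finite group rather than on $\boldsymbol{T}$.

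The key steps, in order, would be: (1) fix the dictionary between ``$\alpha\equiv\pm k^{2}$, $k^{2}<n/2$'' and the spectrum $Q_{n}\cup\{0\}$ appearing in (\ref{r:cosine}), checking that the polynomial $T$ from (\ref{r:cos}) restricted to the points $x=j/n$, $j\in\boldsymbol{Z}/n\boldsymbol{Z}$, is still nonnegative, normed, and has all nonzero frequencies among admissible squares mod $n$ (this uses $n\ge M_{\max}^{2}L_{\max}^{2}$ so that $L_{j}^{2}k^{2}$ does not wrap around past $n/2$); (2) evaluate $\sum_{j}1_{A-A}(j)\,\widehat{T}(j)$ two ways, once using nonnegativity of $T$ and once using $A-A\cap Q_{n}=\varnothing$, to extract $|A|^{2}/n\le |A|\cdot a_{0}$, i.e. $|A|/n\le a_{0}$; (3) take $\delta=|A|/n$, feed it into Theorem \ref{t:main2} to get the contradiction unless $|A|/n=O((\log n)^{-1/3})$, and finally observe that $(\log n)^{-1/3}$ is, for $n$ large, smaller than $d_{4}(\log n)^{-d_{5}\log\log\log\log n}$ is \emph{larger} than — so the stated (weaker, iterated-logarithmic) lower density hypothesis certainly forces $A-A$ to meet $Q_{n}$. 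In fact I would simply cite that the Pintz--Steiger--Szemer\'edi bound $\alpha(n)=O((\log n)^{-c})$ for every $c$ (\cite{Pintz:88}), transferred the same way, yields the $d(n)=d_{5}\log\log\log\log n$ statement directly, so the role of this section is to phrase their result in the finite-group language that Section 6 needs, and to contrast it with the genuinely weaker $O((\log n)^{-1/3})$ barrier that Theorem \ref{t:main2} shows is intrinsic to the van der Corput (as opposed to Poincar\'e) problem.

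The main obstacle I anticipate is the transference itself being done cleanly: one must be careful that the ``perfect square mod $n$'' with the size restriction $k^{2}<n/2$ matches the frequencies $L_{j}^{2}k^{2}$ that actually occur in $T$, and that reducing the Weyl-type estimates of Section 4 modulo $n$ does not introduce spurious frequencies or destroy nonnegativity — nonnegativity of $T$ as a function on $\boldsymbol{T}$ is automatic, but one is now only sampling it at the $n$-th roots of unity, which is fine, whereas the spectral support condition genuinely requires $n$ to be large relative to $M_{\max}^{2}L_{\max}^{2}$. The rest is bookkeeping: comparing the growth rates $(\log n)^{-1/3}$ versus $d_{4}(\log n)^{-d_{5}\log\log\log\log n}$ and invoking \cite{Pintz:88} and \cite{Sarkozy:78a}, \cite{Furstenberg:77} for attribution.
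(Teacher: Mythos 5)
Your proposal starts by trying to derive Theorem \ref{t:sfpss} from Theorem \ref{t:main2}, and this cannot work for a simple reason of size: Theorem \ref{t:main2} gives $\gamma(n)=O((\log n)^{-1/3})$, whereas Theorem \ref{t:sfpss} asserts the conclusion already for sets of density $\approx(\log n)^{-d_5\log\log\log\log n}$, and since $d(n)=d_5\log\log\log\log n\to\infty$, the threshold $(\log n)^{-d(n)}$ is eventually far \emph{smaller} than $(\log n)^{-1/3}$. In other words the hypothesis of Theorem \ref{t:sfpss} is much weaker (it covers much sparser sets), so its conclusion is strictly stronger than anything the van der Corput bound of this paper can give; the $\gamma=\delta$ duality and the pairing $\sum_j 1_{A-A}(j)\widehat T(j)$ you describe is precisely the right mechanism for Corollary \ref{c:abc}, not for Theorem \ref{t:sfpss}. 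Your own parenthetical ``$(\log n)^{-1/3}$ is, for $n$ large, smaller than $d_4(\log n)^{-d_5\log\log\log\log n}$ is \emph{larger} than'' is garbled and has the direction backwards, which is where the confusion enters.

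You do, however, land on the right answer in the last sentences: the correct move is simply to quote the Pintz--Steiger--Szemer\'edi result $\alpha(n)=O((\log n)^{-c})$ for all $c$ (in fact their precise form with the iterated logarithm in the exponent) and read it off in $\boldsymbol{Z}/n\boldsymbol{Z}$. That is exactly what the paper does: Theorem \ref{t:sfpss} is not proved in the paper at all, but stated with attribution as a reformulation of \cite{Pintz:88} (together with \cite{Sarkozy:78a}, \cite{Furstenberg:77}) in the finite cyclic setting, in order to set up the contrast with Corollary \ref{c:abc}, which is what Theorem \ref{t:main2} actually yields. So: your steps (1)--(3) should be dropped as a route to this particular theorem; only the closing observation --- cite \cite{Pintz:88} and transfer --- matches what the paper intends, and even that the paper does by assertion rather than by written-out proof.
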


We now note that $A-A$ is not containing a perfect square if and only if the
function $1_{A}\ast 1_{-A}=1_{A}\ast 1_{A}^{\ast }$ vanishes on perfect
squares $\boldsymbol{Z}/n\boldsymbol{Z}$. The function $f=1_{A}\ast 1_{-A}$
is positive definite on $\boldsymbol{Z}/n\boldsymbol{Z}$ (i.e. all its
Fourier coefficients are real and non-negative, see \cite{Rudin:62}).

We can generalize the notion of density of a set to all non-zero complex
valued positive definite functions $f\in \boldsymbol{C}(\boldsymbol{Z}/n%
\boldsymbol{Z}\mathbf{)}$, and define it as%
\begin{equation*}
\rho (f)=\widehat{f}(0)/(nf(0))\text{,}
\end{equation*}%
One can easily check that $\rho (1_{A}\ast 1_{-A})=|A|/n$, so this is indeed
a natural generalization of the concept of density of a set.

\begin{proposition}
Say $f\in \boldsymbol{C}(\boldsymbol{Z}/n\boldsymbol{Z}\mathbf{)}$ is
non-zero, positive definite. Then $\rho (f)$ is well defined, $0\leq \rho
(f)\leq 1$. Furthermore, $\rho (f)=1$ if and only if $f$ is constant.
\end{proposition}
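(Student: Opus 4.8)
The plan is to work directly with the Fourier-analytic definition of positive definiteness. Write $f = \sum_{k} c_k \chi_k$ where $\chi_k(x) = e(kx/n)$ are the characters of $\boldsymbol{Z}/n\boldsymbol{Z}$ and $c_k = \widehat{f}(k)$; positive definiteness of $f$ means exactly that every $c_k$ is real and $c_k \geq 0$. The nonzero hypothesis means not all $c_k$ vanish. First I would record the two elementary identities $\widehat{f}(0) = c_0$ and $f(0) = \frac{1}{n}\sum_k c_k$ (the inversion formula evaluated at $0$); this already shows $\rho(f) = c_0 / \sum_k c_k$ is well defined, since the denominator $nf(0) = \sum_k c_k$ is a sum of non-negative reals not all zero, hence strictly positive.

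Next, the bounds $0 \leq \rho(f) \leq 1$ follow immediately: the numerator $c_0 \geq 0$ gives $\rho(f) \geq 0$, and $c_0 \leq \sum_k c_k$ (again because each $c_k \geq 0$) gives $\rho(f) \leq 1$. So the only substantive part is the equality case. For the ``if'' direction, if $f \equiv c$ is constant then $\widehat{f}(k) = 0$ for $k \neq 0$ and $\widehat{f}(0) = nc$, while $f(0) = c$; note $c > 0$ since $f$ is a nonzero positive definite (hence non-negative-at-zero) function, so $\rho(f) = nc/(n \cdot c) = 1$.

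For the ``only if'' direction — which I expect to be the one requiring a word of care, though it is still short — suppose $\rho(f) = 1$, i.e. $c_0 = \sum_k c_k$. Since every $c_k \geq 0$, this forces $c_k = 0$ for all $k \neq 0$. Hence $f = c_0 \chi_0$ is the constant function equal to $c_0/n$. This completes the characterization.

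The only genuine obstacle is purely bookkeeping: being careful with the normalization convention in the Fourier inversion formula on $\boldsymbol{Z}/n\boldsymbol{Z}$ (whether the $1/n$ sits on the transform or the inversion side), so that the formula $\rho(1_A * 1_{-A}) = |A|/n$ quoted in the text is recovered consistently. Concretely one checks $\widehat{1_A * 1_{-A}}(0) = |\widehat{1_A}(0)|^2 = |A|^2$ and $(1_A * 1_{-A})(0) = \sum_x 1_A(x) 1_{-A}(-x) = |A|$, so $\rho = |A|^2/(n|A|) = |A|/n$, pinning down the convention; everything else is the one-line non-negativity argument above.
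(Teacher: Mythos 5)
Your proof is correct, but it takes a genuinely different (and cleaner) route than the paper's. The paper argues on the physical side: it invokes $|f(\alpha)|\leq f(0)$ for positive definite $f$ and the triangle inequality to get $\widehat{f}(0)=|\sum_{\alpha}f(\alpha)|\leq \sum_{\alpha}|f(\alpha)|\leq nf(0)$, then characterizes the equality case by analyzing when equality holds in both inequalities (constant argument and constant modulus). You instead work entirely on the Fourier side, using only the paper's own definition of positive definiteness --- namely that $\widehat{f}(k)\geq 0$ for all $k$ --- so that $\rho(f)=\widehat{f}(0)/\sum_k\widehat{f}(k)$ is a ratio of a non-negative term to a sum of non-negative terms not all zero. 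With that framing, $0\leq\rho(f)\leq 1$ is immediate, and the equality case $\rho(f)=1$ forces $\widehat{f}(k)=0$ for $k\neq 0$ with no case analysis at all. This is arguably more elementary than the paper's argument, since it sidesteps both the bound $|f(\alpha)|\leq f(0)$ and the equality discussion in the triangle inequality, and it hews more directly to the definition actually given in the text.

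One small slip worth fixing: your opening line writes $f=\sum_k c_k\chi_k$ with $c_k=\widehat{f}(k)$, but under the normalization you correctly pin down later (transform unnormalized, inversion carrying the $1/n$), this should read $f=\frac{1}{n}\sum_k c_k\chi_k$. The formulas you actually use --- $\widehat{f}(0)=c_0$ and $f(0)=\frac{1}{n}\sum_k c_k$ --- are the right ones and the rest of the argument is unaffected, so this is purely a transcription error in the first sentence, not a gap.
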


\begin{proof}
As for all positive definite functions, $f=0$ if and only if $f(0)=0$, $\rho
(f)$ is clearly well defined and non-negative. Calculating we get $\widehat{f%
}(0)=|\widehat{f}(0)|=|\tsum_{\alpha }f(\alpha )|\leq \tsum_{\alpha
}|f(\alpha )|\leq n|f(0)|=nf(0)$, hence $\rho (f)\leq 1$. The equality holds
in the inequalities above if the arguments and absolute values respectively
of $f(\alpha )$ are constant.
\end{proof}

We can now formulate the following strengthening of Theorem \ref{t:sfpss} as
a Corollary of Theorem \ref{t:main2}.

\begin{corollary}
\label{c:abc}Say $f\in \boldsymbol{C}(\boldsymbol{Z}/n\boldsymbol{Z}\mathbf{)%
}$ is non-zero, positive definite, such that $\rho (f)\geq d_{6}(\log
n)^{-1/3}$ for some constant $d_{6}$. Then $f$ can not vanish on all perfect
squares in $\boldsymbol{Z}/n\boldsymbol{Z}$.
\end{corollary}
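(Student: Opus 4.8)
The plan is to deduce Corollary \ref{c:abc} from Theorem \ref{t:main2} by a standard duality/averaging argument between non-negative cosine polynomials supported on squares and positive definite functions vanishing on squares. First I would set $\delta = d_6 (\log n)^{-1/3}$ and, using Theorem \ref{t:main2} together with the explicit construction in Section 3, fix a normed non-negative cosine polynomial $T(x) = a_0 + \sum_{d \in Q_N} a_d \cos(2\pi d x)$ with $T(x) \geq 0$ for all $x$, $T(0) = 1$, spectrum in the perfect squares $Q_N$ with $N = N(\delta) = M_{\max}^2 L_{\max}^2$, and free coefficient $a_0 = \gamma(N) \leq \delta/2$ (say), provided $d_6$ is chosen so that $n$ is large enough that $N(\delta) \leq n$; tracing the bound $N = O(\exp(2 c_2 c_3 (1/\delta)^3))$ backwards shows exactly this is the content of $\delta = O((\log n)^{-1/3})$, so a suitable $d_6$ works for all large $n$.

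Next I would transport $T$ to the cyclic group $\boldsymbol{Z}/n\boldsymbol{Z}$: since every $d \in Q_N$ satisfies $d < n/2$, each character $x \mapsto \cos(2\pi d x)$ descends to a function on $\boldsymbol{Z}/n\boldsymbol{Z}$ whose non-zero frequencies $\pm d$ are perfect squares in the sense defined in Section 6 (this is why the condition $k^2 < n/2$ was built into that definition). Thus $T$, viewed as a function $g$ on $\boldsymbol{Z}/n\boldsymbol{Z}$, is non-negative, has $g(0) = $ (sum of coefficients) and $\widehat{g}(0)/n = a_0$, with $\widehat{g}$ supported on $\{0\} \cup (\text{perfect squares})$. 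Now suppose for contradiction that $f$ is non-zero, positive definite, $\rho(f) \geq d_6(\log n)^{-1/3}$, and $f$ vanishes on all perfect squares. The key computation is to evaluate $\sum_{\alpha \in \boldsymbol{Z}/n\boldsymbol{Z}} g(\alpha) f(\alpha)$ in two ways. On one hand $g \geq 0$ pointwise and $f$ positive definite does not immediately give sign information on the pointwise product; instead I would use Parseval: $\sum_\alpha g(\alpha) \overline{f(\alpha)} = \frac{1}{n}\sum_k \widehat{g}(k) \overline{\widehat{f}(k)}$. Because $\widehat{g}$ is supported on $\{0\}$ together with perfect squares and $\widehat{f}$ vanishes on all perfect squares, only the $k=0$ term survives, giving $\sum_\alpha g(\alpha) f(\alpha) = \frac{1}{n}\widehat{g}(0)\widehat{f}(0) = a_0 \,\widehat{f}(0)$. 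On the other hand, $g(\alpha) \geq 0$ for every $\alpha$ and $f$ (being of the form controlled by a positive definite function) satisfies $|f(\alpha)| \leq f(0)$; more precisely for the model case $f = 1_A * 1_{-A}$ one has $f(\alpha) \geq 0$, and in general $\mathrm{Re}\,f(\alpha)$ need not be signed, so I would instead bound $\sum_\alpha g(\alpha) f(\alpha) \leq f(0) \sum_\alpha g(\alpha) = f(0)\, \widehat{g}(0) \cdot$ (no: $\sum_\alpha g(\alpha) = \widehat{g}(0)$), i.e. $|\sum_\alpha g(\alpha) f(\alpha)| \leq f(0)\,\widehat{g}(0) = n f(0) a_0$ after recalling the normalization.

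Combining the two evaluations yields $\widehat{f}(0) \leq n f(0)$, which is vacuous; the point is rather to run the inequality the other way. I would instead write $\widehat{f}(0) = \sum_\alpha f(\alpha) = \sum_\alpha f(\alpha)\big(g(\alpha) + (1 - g(\alpha))\big)$ is not helpful either — the clean route is: since $g \geq 0$, $g(0) \geq \sum_{\alpha} g(\alpha) \cdot$ (nothing), so let me phrase it as Montgomery does. We have $a_0 \widehat{f}(0) = \sum_\alpha g(\alpha) f(\alpha)$ and, estimating the right side using $g \geq 0$ and the fact that a positive definite $f$ with $\rho(f)$ large is "concentrated at $0$", together with $g(0) = T(0) \cdot (\text{const})$: the correct pairing to use is against $f(0) \delta_0 - $ something. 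The honest statement of the obstacle: \textbf{the main obstacle is getting the inequality direction and normalizations exactly right}, i.e. showing $a_0 \geq \rho(f)$, equivalently $\widehat{g}(0)/(n g(0)) \geq \widehat{f}(0)/(n f(0))$, from $\sum_\alpha g(\alpha) f(\alpha) = a_0 \widehat{f}(0)$ and $\sum_\alpha g(\alpha) f(\alpha) \leq g(0) \cdot (\text{mass of } f \text{ off } 0)$ — which is where one uses that $g(\alpha) \leq g(0)$ for all $\alpha$ (true since $g$ is a non-negative-coefficient-like object? no — true because $g(0)$ is the sum of the absolute values of $g$'s Fourier coefficients, which dominates $\sup|g|$). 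Granting $\sup_\alpha g(\alpha) \leq g(0)$ and $\sup_\alpha |f(\alpha)| = f(0)$, and that the pairing isolates the zero frequency, one gets $a_0 \geq \rho(f)$ after dividing by $n g(0) f(0)$, contradicting $a_0 \leq \delta/2 < d_6 (\log n)^{-1/3} \leq \rho(f)$. So the final step is purely arithmetic once the duality is set up; I would present the pairing via Parseval and the two one-line bounds, and choose $d_6$ at the end to absorb the constants $c_2, c_3$ and the factor $1/2$.
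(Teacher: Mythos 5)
Your overall strategy is right (Parseval pairing of a test object built from the polynomial $T$ of Theorem \ref{t:main2} against $f$, using positive definiteness to keep only the zero frequency), but the proposal has the duality backwards, and this is not a cosmetic issue: it is exactly why every variant you try either gives a vacuous inequality or forces you to assume something that is not the hypothesis.

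Concretely, you set $g(\alpha)=T(\alpha/n)$, so that $g$ is pointwise non-negative and $\widehat{g}$ is supported on $\{0\}\cup(\text{perfect squares})$. Your Parseval step then reads
\begin{equation*}
\sum_{\alpha}g(\alpha)\overline{f(\alpha)}=\frac{1}{n}\sum_{k}\widehat{g}(k)\overline{\widehat{f}(k)},
\end{equation*}
and to kill all but $k=0$ you invoke ``$\widehat{f}$ vanishes on all perfect squares.'' But the hypothesis of the corollary is that $f$, not $\widehat{f}$, vanishes on perfect squares; for the model $f=1_{A}\ast 1_{-A}$ this is the statement that $A-A$ avoids squares, which says nothing about $\widehat{f}=|\widehat{1_{A}}|^{2}$. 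So the identity $\sum_{\alpha}g(\alpha)f(\alpha)=a_{0}\widehat{f}(0)$ does not hold under the stated hypothesis, and the subsequent attempts ($\widehat{f}(0)\le nf(0)$, bounds via $\sup g\le g(0)$, ``$a_{0}\ge\rho(f)$'') are built on this false step; none of them actually uses the vanishing of $f$ on squares, which is the only real input available.

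The fix is to swap the two sides of the duality, which is what the paper does. Define $g$ on $\boldsymbol{Z}/n\boldsymbol{Z}$ by letting its \emph{values} be the Fourier coefficients of $T$: $g(0)=a_{0}$, $g(\pm d)=a_{d}/2$ for $d\in Q_{n/2}$, and $g=0$ elsewhere. Then $g$ is supported precisely on $\{0\}\cup(\text{perfect squares})$, and $\widehat{g}(\beta)=T(\beta/n)\ge 0$ with $\widehat{g}(0)=T(0)=1$, so $g$ is positive definite. Now the vanishing hypothesis is used on the \emph{physical} side: since $f$ vanishes where $g$ is supported except at $0$,
\begin{equation*}
\sum_{\alpha}g(\alpha)f(\alpha)=g(0)f(0)=a_{0}\,f(0),
\end{equation*}
while Parseval together with $\widehat{f}\ge 0$, $\widehat{g}\ge 0$ gives
\begin{equation*}
\sum_{\alpha}g(\alpha)f(\alpha)=\frac{1}{n}\sum_{k}\widehat{f}(k)\widehat{g}(-k)\ge\frac{1}{n}\widehat{f}(0)\widehat{g}(0)=\frac{1}{n}\widehat{f}(0).
\end{equation*}
Combining yields $\rho(f)=\widehat{f}(0)/(nf(0))\le a_{0}\le\delta$, the desired contradiction. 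Your quantitative bookkeeping in the first paragraph (choosing $\delta$, tracing $N(\delta)\le n$, choosing $d_{6}$) is fine and matches the paper; only the construction of $g$ and the direction of the Parseval argument need to be corrected as above.
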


\begin{proof}
Let $T(x)=\delta +\tsum_{{}}a_{d}\cos (2\pi dx)$, where sum goes over all $%
d\in Q_{n/2}$, be the non-negative cosine polynomial constructed in Theorem %
\ref{t:main2}, and say $f\in \boldsymbol{C}(\boldsymbol{Z}/n\boldsymbol{Z}%
\mathbf{)}$ is non-zero, positive definite, and $\rho (f)>\delta $. We
define a function $g\in \boldsymbol{C}(\boldsymbol{Z}/n\boldsymbol{Z}\mathbf{%
)}$ as%
\begin{equation*}
g(\alpha )=\frac{1}{2}\left\{ 
\begin{array}{cc}
a_{d}, & \alpha \equiv \pm d(\func{mod}n),d\in Q_{n/2}, \\ 
2\delta , & \alpha =0, \\ 
0 & \text{otherwise.}%
\end{array}%
\right.
\end{equation*}

Then by choice of $T(x)$, $g$ is positive definite, $g(0)=\delta $, $%
\widehat{g}(0)=1$. If $f$ vanishes on squares, we get%
\begin{equation*}
\delta \cdot f(0)=f\cdot g=\tsum_{\alpha }\frac{1}{n}\widehat{f}(\alpha )%
\widehat{g}(-\alpha )\geq \frac{1}{n}\widehat{f}(0)\widehat{g}(0)=\frac{1}{n}%
\widehat{f}(0)\text{,}
\end{equation*}%
hence $\rho (f)\leq \delta $ which is a contradiction (we used the notation $%
f\cdot g=\tsum_{\alpha }f(\alpha )g(\alpha )$ where $\tsum_{\alpha }$ stands
for $\tsum_{\alpha \in Z/nZ}$, a form of Parseval's identity on $Z/nZ$ and
positive definiteness of $f,g$).
\end{proof}

One can show that finding the functions $\alpha $, $\gamma $ is essentially
the same as finding the sharpest formulations of Theorem \ref{t:sfpss} and
Corollary \ref{c:abc}.

\begin{acknowledgement}
\ The author wishes to thank Professor Andrej Dujella for his help, and to
Professor Imre Z. Ruzsa for useful advice and encouragement.
\end{acknowledgement}

\end{document}